%BeginFileInfo
%%Publisher=ARXIV
%%Project=AOP
%%Manuscript=AOP995
%EndFileInfo
%
% Institute of Mathematical Statistics (IMI)
% Journal "The Annals of Probabability"

%secthm,,secfloat,nameyear,number,noautosecdot
\input ./style/arxiv-general.cfg
\documentclass[aop,MSNbibl,seceqn,dvips]{arximspdf}
\makeatletter
   \@ifpackageloaded{graphicx}{}{\usepackage{graphicx}}
\makeatother
\usepackage{colonequals,subeqn}
\usepackage{url,breakurl}
%\usepackage{}

% settings
%

% article settings
\doi{10.1214/14-AOP995}% Updated by VTEXPTS2LaTeX.exe, 23.02.2015 09:31
\volume{44}
\issue{2}
\pubyear{2016}
\firstpage{1013}
\lastpage{1052}
\docsubty{FLA}

\makeatletter
%\innerskeltrue
\newcommand{\eqref}[1]{(\ref{#1})}
\newtheorem{theorem}{Theorem}[section]
\newtheorem{lemma}[theorem]{Lemma}
\newtheorem{proposition}[theorem]{Proposition}
\newtheorem{corollary}[theorem]{Corollary}

\newproclaim{definition}[theorem]{Definition}
\newproclaim{assumption}[theorem]{Assumption}
\newproclaim{remark}[theorem]{Remark}

\newcommand{\eps}{\varepsilon}
\renewcommand{\P}{\mathbb{P}}
\newcommand{\C}{\mathbb{C}}
\newcommand{\D}{\mathbb{D}}
\newcommand{\E}{\mathbb{E}}
\newcommand{\N}{\mathbb{N}}
\newcommand{\Z}{\mathbb{Z}}
\newcommand{\Q}{\mathbb{Q}}
\newcommand{\R}{\mathbb{R}}
\newcommand{\lp}{\nu}
\newcommand{\gp}{\alpha}
\newcommand{\mgfparam}{\lambda}
\newcommand{\mgfparamtwo}{\eta}
\newcommand{\gffparam}{\sigma}

\newcommand{\CC}{\mathcal{C}}
\newcommand{\CD}{\mathcal{D}}
\newcommand{\CF}{\mathcal{F}}
\newcommand{\CQ}{\mathcal{Q}}
\newcommand{\CS}{\mathcal{S}}
\newcommand{\CU}{\mathcal{U}}

\newcommand{\CH}{\mathcal{H}}

\newcommand{\dist}{\operatorname{dist}}
\newcommand{\diam}{\operatorname{diam}}
\newcommand{\SLE}{\operatorname{SLE}}
\newcommand{\CLE}{\operatorname{CLE}}
\newcommand{\confrad}{\operatorname{CR}}
\newcommand{\inrad}{\operatorname{inrad}}
\newcommand{\essinf}{\operatorname{ess\, inf}}
\newcommand{\esssup}{\operatorname{ess\, sup}}

\newcommand{\Loop}{\mathcal{L}}
\newcommand{\Loopcount}{\mathcal{N}}
\newcommand{\SLoopcount}{\mathcal{S}}
\newcommand{\TLoopcount}{\widetilde{\mathcal{N}}}
\newcommand{\TLoopsum}{\widetilde{\mathcal{S}}}

\newcommand{\given}{ | }

\newcommand{\one}{\mathbf{1}}

\newcommand{\ol}{\overline}
\newcommand{\wh}{\widehat}
\newcommand{\wt}{\widetilde}

%\newcommand{\arXiv}[1]{\href{http://arxiv.org/abs/#1}{arXiv:#1}}

%\definecolor{dgreen}{rgb}{0.0,0.5,0.0}
%\definecolor{dorange}{rgb}{1.0,0.5,0.0}

%\newtoggle{BW}
%\togglefalse{BW}
%
%\iftoggle{BW}{
%\newcommand{\sam}[1]{{\color{black} #1}}
%\newcommand{\david}[1]{{\color{black} #1}}
%\newcommand{\jason}[1]{{\color{black} #1}}
%}{
%\newcommand{\sam}[1]{{#1}}
%\newcommand{\david}[1]{{#1}}
%\newcommand{\jason}[1]{{#1}}
%}
\makeatother

\begin{document}
\begin{frontmatter}

%\dochead{}
\title{Extreme nesting in~the~conformal~loop~ensemble}
\runtitle{Extreme nesting in CLE}

\begin{aug}
% Corresponding author: Jason Miller - jpmiller@mit.edu% Updated by
%VTEXPTS2LaTeX.exe, 23.02.2015 09:31
\author[A]{\fnms{Jason}~\snm{Miller}\corref{}\thanksref{T1,m1,m2}\ead[label=e1]{jpmiller@mit.edu}},
\author[A]{\fnms{Samuel S.}~\snm{Watson}\thanksref{T2,m1,m2}} %\ead[label=]{}
\and
\author[B]{\fnms{David B.}~\snm{Wilson}\thanksref{m1}\ead[label=e3]{David.Wilson@microsoft.com}\ead[label=u3]{http://dbwilson.com}} %
\runauthor{J. Miller, S.~S. Watson and D.~B. Wilson}
\thankstext{T1}{Supported in part by Grant DMS-12-04894.}
\thankstext{T2}{Supported in part by an NSF Graduate Research Fellowship, award No. 1122374.}
\affiliation{Microsoft Research\thanksmark{m1} and Massachusetts Institute of Technology\thanksmark{m2}}
%\dedicated{}
\address[A]{J. Miller\\
S.~S. Watson\\
Department of Mathematics\\
Massachusetts Institute of Technology\\
Cambridge, Massachusetts 02142\\
USA\\
\printead{e1}}
\address[B]{D.~B. Wilson\\
Microsoft Research\\
Redmond, WAashington 98052\\
USA\\
\printead{e3}\\
\printead{u3}}
\end{aug}

% HISTORY:
%
\received{\smonth{1} \syear{2014}}% Updated by VTEXPTS2LaTeX.exe,
%23.02.2015 09:31
%
\revised{\smonth{12} \syear{2014}}% Updated by VTEXPTS2LaTeX.exe,
%23.02.2015 09:31

% ABSTRACT
%
\begin{abstract}
The conformal loop ensemble $\CLE_\kappa$ with parameter $8/3 <
\kappa<
8$ is the canonical conformally invariant measure on countably infinite
collections of noncrossing loops in a simply connected domain. Given
$\kappa$ and $\nu$, we compute the almost-sure Hausdorff dimension of the
set of points $z$ for which the number of CLE loops surrounding the disk
of radius $\eps$ centered at $z$ has asymptotic growth $\nu\log
(1/\eps)$
as \mbox{$\eps\to0$}. By extending these results to a setting in which
the loops are given i.i.d. weights, we give a CLE-based treatment of the
extremes of the Gaussian free field.
\end{abstract}

% KEYWORDS
% Pirmas kwd is didziosios raides
%
\begin{keyword}[class=AMS]
\kwd[Primary ]{60J67}
\kwd{60F10}
\kwd[; secondary ]{60D05}
\kwd{37A25}
\end{keyword}
\begin{keyword}
\kwd{SLE}
\kwd{CLE}
\kwd{conformal loop ensemble}
\kwd{Gaussian free field}
\end{keyword}
\end{frontmatter}

%s1 #&#
\section{Introduction}
\label{secpintroduction}

The conformal loop ensemble $\CLE_\kappa$ for $\kappa
\in(8/3,8)$ is the
canonical conformally invariant measure on countably infinite collections
of noncrossing loops in a simply connected domain \label{notpdomD} $D
\subsetneq\C$ \cite{SHE_CLE,SW_CLE}. It is the loop analogue of
$\SLE_\kappa$, the canonical conformally invariant measure on noncrossing
paths. Just as $\SLE_\kappa$ arises as the scaling limit of a single
interface in many two-dimensional discrete models, $\CLE_\kappa$ is a
limiting law for the joint distribution of all of the interfaces.
Figures \ref{figpOn} and \ref{figpFK} show two discrete loop models
believed or known to have $\CLE_\kappa$ as a scaling limit.
Figure~\ref{figpCLE_examples} illustrates these scaling limits
$\CLE_\kappa$ for several values of $\kappa$.

%f1 #&#
\begin{figure}

\includegraphics{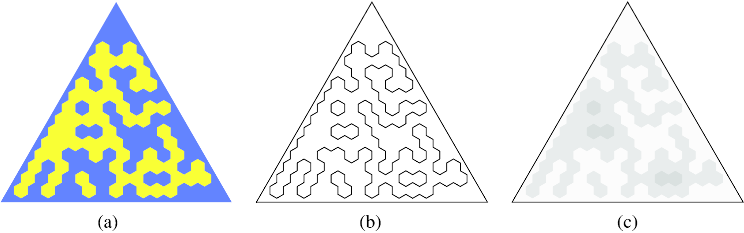}

\caption{Nesting\vspace*{1pt} of loops in the $O(n)$ loop model.
Each $O(n)$ loop configuration has probability proportional to
$x^{\mathrm{total\ length\ of\ loops}} \times n^{\mathrm{\#\ loops}}$.
For a certain critical value of $x$,
the $O(n)$ model for $0\leq n\leq2$ has a ``dilute
phase,'' which is believed to
converge $\CLE_\kappa$ for $8/3<\kappa\leq4$ with $n=-2\cos(4\pi
/\kappa)$.
For $x$ above this critical value, the $O(n)$ loop model
is in a ``dense phase,''
which is believed to converge to $\CLE_\kappa$ for $4\leq\kappa\leq8$,
again with $n=-2\cos(4\pi/\kappa)$. See \cite{KN04} for further background.
\textup{(a)} Site percolation. \textup{(b)} $O(n)$ loop model. Percolation corresponds to $n=1$ and
$x=1$, which is in the dense phase. \textup{(c)} Area shaded by nesting of loops.}
\label{figpOn}
\end{figure}

%\begin{figure}[h!]
%\begin{center}
%\subfigure[]{\includegraphics[width=.32
%\textwidth]{figures/On-spin}}
%\hfill
%\subfigure[]{\includegraphics[width=.32
%\textwidth]{figures/On-loop}}
%\hfill
%\subfigure[]{\includegraphics[width=.32
%\textwidth]{figures/On-nest}}
%\end{center}
%
%\end{figure}

%f2 #&#
\begin{figure}[b]

\includegraphics{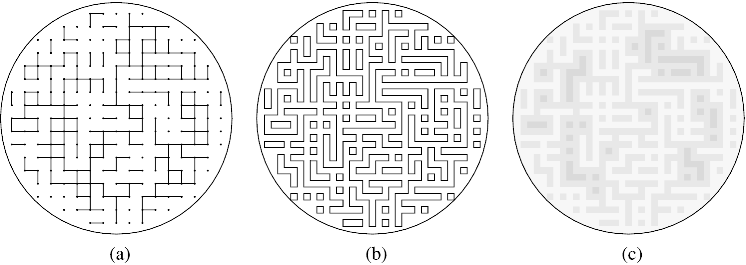}

\caption{Nesting of loops separating critical
Fortuin--Kasteleyn (FK) clusters from dual clusters.
Each FK bond configuration has probability proportional to
$(p/(1-p))^{\mathrm{\#\ edges}} \times q^{\mathrm{\#\ clusters}}$ \cite
{FK}, where
there is believed to be a critical point at $p=1/(1+1/\sqrt{q})$
(proved for $q\geq1$ \cite{BDC}).
For $0\leq q\leq4$, these loops are believed to have the
same large-scale behavior as the $O(n)$ model
loops for $n=\sqrt{q}$ in the dense phase, that is, to
converge to $\CLE_\kappa$ for $4\leq\kappa\leq8$ (see \cite
{RS05,KN04}). \textup{(a)} Critical FK bond configuration. Here $q=2$.
\textup{(b)} Loops separating FK clusters from dual clusters.
\textup{(c)} Area shaded by nesting of loops.}\label{figpFK}
\end{figure}

%\begin{figure}
%\begin{center}
%\subfigure[]{
%\includegraphics[width=.32\textwidth]{figures/FK-bond}}
%\hfill
%\subfigure[]{
%\includegraphics[width=.32\textwidth]{figures/FK-loop}}
%\hfill
%\subfigure[]{\includegraphics[width=.32
%\textwidth]{figures/FK-nest}}
%\end{center}
%
%\end{figure}

%f3 #&#
\begin{figure}

\includegraphics{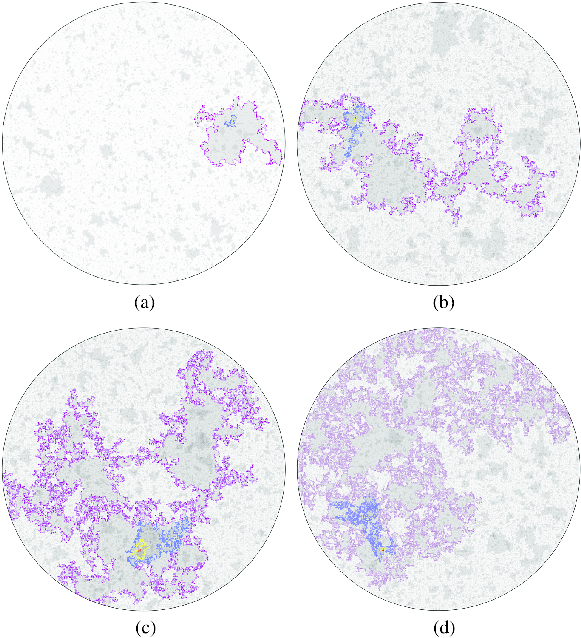}

\caption{Simulations of discrete loop
models which converge to (or are believed to converge to, indicated
with $\star$) $\CLE_\kappa$ in the fine mesh limit.
For each of the $\CLE_\kappa$'s, one particular nested
sequence of loops is outlined. For $\CLE_\kappa$, almost all of the
points in the domain are surrounded by an infinite nested sequence of
loops, though the discrete samples shown here display only a few orders
of nesting. \textup{(a)} $\CLE_3$ (from critical Ising model).
\textup{(b)} $\CLE_4$ (from the FK model with $q=4$) $\star$.
\textup{(c)} $\CLE_{16/3}$ (from the FK model with $q=2$).
\textup{(d)} $\CLE_6$ (from critical bond percolation) $\star$.}\label{figpCLE_examples}
\end{figure}

%\begin{figure}
%\begin{center}
%\subfigure[]{
%\includegraphics[width=0.495
%\textwidth]{figures/nestedloops/nestedloops-3-512-2-lowres}} \hfill
%\subfigure[]{
%\includegraphics[width=0.495
%\textwidth]{figures/nestedloops/nestedloops-4-256-2-lowres}}\\[24pt]
%\subfigure[]{
%\includegraphics[width=0.495
%\textwidth]{figures/nestedloops/nestedloops-163-256-2-lowres}} \hfill
%\subfigure[]{
%\includegraphics[width=0.495
%\textwidth]{figures/nestedloops/nestedloops-6-512-square-lowres}}
%\end{center}
%
%\end{figure}

%s1.1 #&#
\subsection{Overview of main results}

Fix a simply connected domain $D \subsetneq\C$ and let $\Gamma$
\label{notpCLE} be a $\CLE_\kappa$ in $D$. For each point $z \in D$ and
$\eps> 0$, we let $\Loopcount_z(\eps)$ \label{notpLoopcount} be the
number of loops of $\Gamma$ which surround $B(z,\eps)$, the ball of radius
$\eps$ centered at $z$. We study the behavior of the \textit
{extremes} of
$\Loopcount_z(\eps)$ as $\eps\to0$, that is, points where
$\Loopcount_z(\eps)$ grows unusually quickly or slowly
(Theorem~\ref{thmpmain}). We also analyze a more general setting in which
each of the loops is assigned an i.i.d. weight sampled from a given law
$\mu$. \label{notpmu} This in turn is connected with the extremes of the
continuum Gaussian free field (GFF) \cite{HMP} when $\kappa=4$ and
$\mu(\{-\gffparam\}) = \mu(\{\gffparam\}) = \frac{1}{2}$ for a particular
value of $\gffparam> 0$ (Theorems~\ref{thmpgff_maximum} and~\ref{thmpgff_maximum_support}).

%s1.2 #&#
\subsection{Extremes}

Fix $\alpha\geq0$. The Hausdorff $\alpha$-measure $\CH_\alpha$ of
a set
$E\subset\C$ is defined to be
\[
\CH_\alpha(E) = \lim_{\delta\to0} \biggl(\inf \biggl\{ \sum
_{i} \bigl( \diam(F_i)
\bigr)^\alpha \dvtx \bigcup_i
F_i \supseteq E, \diam (F_i) < \delta \biggr\} \biggr),
\]
where the infimum is over all countable collections $\{F_i\}$ of sets. The
Hausdorff dimension of $E$ is defined to be
\[
\dim_\CH(E)\colonequals\inf\bigl\{\alpha\geq0 \dvtx
\CH_\alpha(E)=0\bigr\}.
\]
 For each $z \in D$ and $\eps> 0$, let
%
%e1.1 #&#
\begin{equation}
\label{eqpTloopcount} \TLoopcount_z(\eps)\colonequals\frac{\Loopcount_z(\eps)}{\log
(1/\eps)}.
\end{equation}
For $\lp\geq0$, we define
%
%e1.2 #&#
\begin{equation}
\label{notpPhi_lp} \Phi_\lp(\CLE_\kappa) \colonequals
\Phi_\lp(\Gamma) \colonequals \Bigl\{ z \in D \dvtx \lim
_{\eps\to0} \TLoopcount_z(\eps) = \lp \Bigr\}.
\end{equation}

Our first result gives the almost-sure Hausdorff dimension of
$\Phi_\lp(\CLE_\kappa)$. The dimension is given in terms of the
distribution of the conformal radius of the connected component of the
outermost loop surrounding the origin in a $\CLE_\kappa$ in the unit
disk. More precisely, the \textit{conformal radius} $\confrad(z,U)$
of a
simply connected proper domain $U\subset\C$ with respect to a point
$z\in
U$ is defined to be $|\varphi'(0)|$ where $\varphi\colon\D\to U$ is a
conformal map which sends 0 to $z$. For each $z \in D$, let
\label{notploop} $\Loop_z^k$ be the $k$th largest loop of $\Gamma$ which
surrounds $z$, and let $U_z^k$ be the connected component of the open set
$D\setminus\Loop_z^k$ which contains $z$. Take $D=\D$ and let
$T=-\log(\confrad(0,U_0^1))$. The $\log$ moment generating function
of $T$
was computed in \cite{SSW} and is given by
%
%e1.3 #&#
\begin{equation}
\label{eqnpLambdaR} \Lambda_\kappa(\mgfparam) \colonequals\log\E
\bigl[e^{\mgfparam
T} \bigr] = \log \biggl(  \frac{-\cos(4\pi/\kappa)}{\cos (\pi
\sqrt{ (1-{4}/{\kappa} )^2+
{8\mgfparam}/{\kappa}} )}
\biggr) ,
\end{equation}
for $-\infty< \mgfparam< 1 - \frac{2}{\kappa} - \frac{3\kappa
}{32}$. The
almost-sure value of $\dim_\CH\Phi_\lp(\Gamma)$ is given in terms
of the
\textit{Fenchel--Legendre transform} $\Lambda_\kappa^\star
\dvtx \mathbb{R} \to
[0,\infty]$ of $\Lambda_\kappa$, which is defined by \label{notpfenchel}
\[
\Lambda_\kappa^\star(x)\colonequals\sup_{\mgfparam\in\R
}
\bigl(\mgfparam x - \Lambda_\kappa(\mgfparam)\bigr).
\]
We also define
%
%e1.4 #&#
\begin{equation}
\label{eqnpg(nu)} \gamma_\kappa(\lp) = \cases{
\lp\Lambda_\kappa^\star(1/\lp), &\quad $\mbox {if } \lp> 0$,
\vspace*{2pt}
\cr
\displaystyle 1 - \frac{2}{\kappa} - \frac{3\kappa}{32}, &\quad $\mbox{if }\lp=0
$.}
\end{equation}

See Corollary~\ref{corphow_many_summands} for discussion of the
formula in
\eqref{eqnpg(nu)}.

%th1.1 #&#
\begin{theorem}
\label{thmpmain}
Let $\kappa\in(8/3,8)$, and let $\lp_{\mathrm{max}}$ be the unique
value of
$\lp\geq0$ such that $\gamma_\kappa(\nu) = 2$. If $0\leq\lp\leq
\lp_{\mathrm{max}}$,
then almost surely
%
%e1.5 #&#
\begin{equation}
\label{eqnpmain_dim} \dim_\CH\Phi_\lp(\CLE_\kappa) =
2 - \gamma_\kappa(\lp)
\end{equation}
and $\Phi_\lp(\CLE_\kappa)$ is dense in $D$. If $\lp_{\mathrm
{max}}< \lp$, then
$\Phi_\lp(\CLE_\kappa)$ is almost surely empty. (See Figures \ref{figpdimplot} and \ref{figpavgmax}.)

Moreover, if $\Gamma$ is a $\CLE_\kappa$ in $D$, $\varphi\colon D
\to\acute{D}$ is a conformal transformation, $\acute{\Gamma}
\colonequals
\varphi(\Gamma)$, and $\Phi_\lp(\acute{\Gamma})$ is defined to be the
corresponding set of extremes of $\acute{\Gamma}$, then
$\Phi_\lp(\acute{\Gamma}) = \varphi(\Phi_\lp(\Gamma))$ almost surely.
\end{theorem}

%f4 #&#
\begin{figure}

\includegraphics{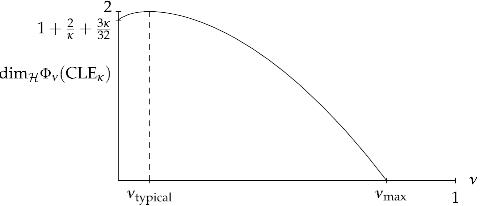}

\caption{Suppose that $D \subsetneq\C$ is a simply
connected domain and let $\Gamma$ be a $\CLE_\kappa$ in $D$. For
$\kappa
\in(8/3,8)$ and $\lp\geq0$, we let $\Phi_\lp(\Gamma)$ be the set of
points $z$ for which the number of loops $\Loopcount_z(\eps)$ of
$\Gamma$
surrounding $B(z,\eps)$ is $(\lp+o(1)) \log(1/\eps)$ as $\eps\to0$.
The plot above shows how the almost-sure Hausdorff dimension of
$\Phi_\lp(\CLE_\kappa)$ established in Theorem \protect\ref
{thmpmain} depends
on $\lp$ (the figure is for $\kappa=6$, but the behavior is similar for
other values of $\kappa$). The value $1+\frac{2}{\kappa} +
\frac{3\kappa}{32}=\dim_\CH\Phi_\lp(\CLE_\kappa)$ is the almost-sure
Hausdorff dimension of the $\CLE_\kappa$ gasket \cite{SSW,NW,MSW}, which
is the set of points in $D$ which are not surrounded by any loop of
$\Gamma$.} \label{figpdimplot}
\end{figure}

%f5 #&#
\begin{figure}[b]

\includegraphics{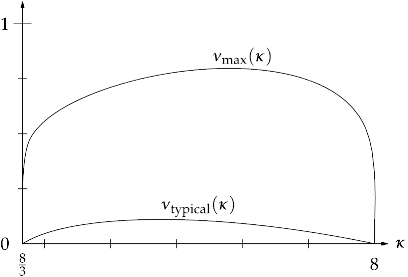}

\caption{The typical nesting and maximal nesting constants ($\lp
_{\mathrm{typical}}$ and
$\lp_{\operatorname{max}}$) plotted versus $\kappa$. For example,
when $\kappa=6$, Lebesgue almost
all points are surrounded by $(0.091888149\ldots+o(1)) \log(1/\eps)$
loops with inradius at least $\eps$, while some points are surrounded by
as many as $(0.79577041\ldots+o(1)) \log(1/\eps)$ loops.}
\label{figpavgmax}
\end{figure}

We also show in Theorem~\ref{thmpuncountable} that
$\Phi_{\nu_{\max}}(\Gamma)$ is almost surely uncountably infinite
for all
$\kappa\in(8/3,8)$. This contrasts with the critical case for thick
points of the Gaussian free field: it has only been proved that the set
of critical thick points is infinite (not necessarily uncountably
infinite); see Theorem~1.1 of \cite{HMP}.

See Figure~\ref{figpdimplot} for a plot of the Hausdorff dimension of
$\Phi_\lp(\CLE_6)$ as a function of $\nu$. The discrete analog of
Theorem~\ref{thmpmain} would be to give the growth exponent of the
set of
points which are surrounded by unusually few or many loops for a given
model as the size of the mesh tends to zero. Theorem~\ref{thmpmain} gives
predictions for these exponents. Since $\CLE_6$ is the scaling limit of
the interfaces of critical percolation on the triangular lattice
\cite{S05,CN06,CN07}, Theorem~\ref{thmpmain} predicts that the typical
point in critical percolation is surrounded by $(0.09189\ldots+o(1))
\log(1/\eps)$ loops as $\eps\to0$, where $\eps> 0$ is the lattice
spacing.

We give a brief explanation of the proof for the case $\lp= 1/ \E T$: by
the renewal property of $\CLE_\kappa$,
the random variables $\log\confrad(z,U_z^k) - \log\confrad(z,U_z^{k+1})$
are i.i.d. and equal in distribution to $T$. It follows from the law of
large numbers (and basic distortion estimates for conformal maps) that, for
$z\in D$ fixed, $\TLoopcount_z(\eps)\to1/\E T \mbox{ as }\eps\to0$,
almost surely. By the Fubini--Tonelli theorem, we conclude that the
expected Lebesgue\vadjust{\goodbreak} measure of the set of points for which
$\TLoopcount_z(\eps) \nrightarrow1/\E T$ is 0. It follows that almost
surely, there is a full-measure set of points $z$ for which
$\TLoopcount_z(\eps)\to1/\E T$. In other words, $\lp=\lp_{\mathrm
{typical}}
\colonequals
1/\E T$ corresponds to typical behavior, while points in
$\Phi_\lp(\CLE_\kappa)$ for $\lp\neq1/\E T$ have exceptional loop-count
growth.

The idea to prove Theorem~\ref{thmpmain} for other values of $\lp$ is
to use a multiscale refinement of the second moment method
\cite{HMP,DPRZ}. The main challenge in applying the second moment
method to obtain the lower bound of the dimension of the set
$\Phi_\lp(\CLE_\kappa)$ in Theorem~\ref{thmpmain} is to deal with the
complicated geometry of $\CLE$ loops. In particular, for any pair of
points $z,w \in D$ and $\eps>0$, there is a positive probability that
single loop will come within distance $\eps$ of both $z$ and $w$. To
circumvent this difficulty, we restrict our attention to a special
class of points $z \in\Phi_\lp(\CLE_\kappa)$ in which we have precise
control of the geometry of the loops which surround $z$ at every
length scale.

The $\CLE$ gasket is defined to be the set of points $z \in D$ which are
not surrounded by any loop of $\Gamma$. Equivalently, the gasket is the
closure of the union of the set of outermost loops of $\Gamma$. Its
expectation dimension, the growth exponent of the expected minimum number
of balls of radius $\eps> 0$ necessary to cover the gasket as $\eps
\to
0$, is given by $1+\frac{2}{\kappa} + \frac{3\kappa}{32}$ \cite
{SSW}. It
is proved in \cite{NW} using Brownian loop soups that the almost-sure
Hausdorff dimension of the gasket when $\kappa\in(8/3,4]$ is
$1+\frac{2}{\kappa} + \frac{3\kappa}{32}$, and it is shown in
\cite{MSW}
that this result holds for $\kappa\in(4,8)$ as well. We show in
Proposition~\ref{propplimit_at_zero} that the limit as $\lp\to0$ of
$\dim_\CH\Phi_\lp(\Gamma)$ is $1+\frac{2}{\kappa} + \frac
{3\kappa}{32}$
(equivalently, $\gamma_\kappa$ is right continuous at $0$).
Consequently, from the
perspective of Hausdorff dimension, there is no nontrivial intermediate
scale of loop count growth which lies between logarithmic growth and the
gasket.

Theorem~\ref{thmpmain} is a special case of a more general result, stated
as Theorem~\ref{thmparbitrary_distribution} in
Section~\ref{secpweighted_loops}, in which we associate with each loop
$\Loop$ of $\Gamma$ an i.i.d. weight $\xi_\Loop$ distributed
according to
some probability measure $\mu$. For each $\gp> 0$, we give the
almost-sure Hausdorff dimension of the set
\[
\Phi^\mu_\gp(\Gamma) \colonequals \Bigl\{z\in D \dvtx
\lim_{\eps\to
0^+} \TLoopsum_\eps(z) = \gp \Bigr\}
\]
of extremes of the normalized weighted loop counts
a graph
%
%e1.6 #&#
\begin{equation}
\label{eqnpnormalized_loop_count} \TLoopsum_z(\eps) = \frac{1}{\log(1/\eps)}
\SLoopcount_z(\eps) \qquad\mbox{where } \SLoopcount_z(\eps) =
\sum_{\Loop\in\Gamma_z(\eps)} \xi_\Loop,
\end{equation}
and $\Gamma_z(\eps)$ is the
set of loops of $\Gamma$ which surround $B(z,\eps)$. This dimension is
given in terms of $\Lambda_\kappa^\star$ and the Fenchel--Legendre transform
$\Lambda_\mu^\star$ of $\mu$. Although the dimension for general weight
measures $\mu$ and $\kappa\in(8/3,8)$ is given by a complicated
optimization problem, when $\kappa=4$ and $\mu$ is a signed Bernoulli
distribution, this dimension takes a particularly nice form. We state
this result as our second theorem.

%th1.2 #&#
\begin{theorem}
\label{thmpgff_maximum}
Fix $\gffparam> 0$, and define $\mu_{\rm B} (\{\gffparam\})=\mu
_{\rm
B}(\{-\gffparam\}) = \frac{1}{2}$. In the special case $\kappa=4$ and
$\mu= \mu_{\rm B}$, almost surely
%
%e1.7 #&#
\begin{equation}
\label{eqnpgff_dim} \dim_\CH\Phi^{\mu_B}_\gp(\Gamma) =
\max \biggl(0,2-\frac{\pi
^2}{2\gffparam^2} \gp^2 \biggr).
\end{equation}
\end{theorem}

This case has a special interpretation which explains the formula
\eqref{eqnpgff_dim} for the dimension. It is proved in \cite{MS_CLE} that
for $\gffparam=\sqrt{\pi/2}$,
the random height field $\SLoopcount_z(\eps)$ converges in the space of
distributions as $\eps\to0$ to a two-dimensional Gaussian free field $h$,
and the loops $\Gamma$ can be thought of as the level sets of $h$. Since $h$
is distribution-valued, $h$ does not have a well-defined value at any
given point, nor does $h$ have
level sets strictly speaking,
but there is a way to make this precise.
This GFF interpretation suggests a correspondence
between the extremes of $\SLoopcount_z(\eps)$ and the extremes of $h$.
The extreme values of $h$ (also called
\textit{thick points}) can be defined by considering the average
$h_\eps(z)$ of $h$ on $\partial B(z,\eps)$ and defining $T(\gp)$ to
be the
set of points $z$ for which $h_\eps(z)$ grows like $\gp\log(1/\eps
)$ as
$\eps\to0$. Thick points were introduced by Kahane in the context of
Gaussian multiplicative chaos (see
\cite{rhodes-vargas}, Section~4, for further background). It is shown in
\cite{HMP} that $\dim_\CH T(\gp) = 2-\pi\gp^2$, which equals $\dim
_\CH
\Phi^{\mu_{\rm B}}$ when $\sigma=\sqrt{\pi/2}$ and $\kappa= 4$. The
following theorem relates exceptional loop count growth with the extremes
of the GFF. Loosely speaking, it says that for each $\alpha$ there is a
unique value of $\nu$ for which ``most'' of the $\alpha$-thick points have
loop counts $\TLoopcount\approx\nu$.

%f6 #&#
\begin{figure}

\includegraphics{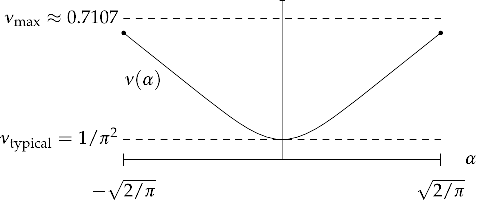}

\caption{A graph of $\lp(\gp)$ versus $\gp$,
which gives the typical loop growth $\lp\log(1/\eps)$ corresponding to
each point with signed loop growth $\gp\log(1/\eps)$, for $\gp\in
 [-\sqrt{2/\pi},\sqrt{2/\pi} ]$. Also shown is the value
$\lp_{\mathrm{max}}$ beyond which there are no points having growth
$\lp
\log(1/\eps)$.}\label{GFF_loop_profile}
\end{figure}

%th1.3 #&#
\begin{theorem}
\label{thmpgff_maximum_support}
Let $\kappa=4$ and $\mu_{\rm B}(\{\sqrt{\pi/2}\})=\mu_{\rm B}(\{
-\sqrt{\pi/2}\}) = \frac{1}{2}$.
For every $\gp\in[-\sqrt{2/\pi},\sqrt{2/\pi}]$, there exists a
unique $\lp= \lp(\gp) \geq0$ such
that the Hausdorff dimension of the set of points with $\TLoopsum
_z(\eps)
\to\gp$ as $\eps\to0$ is equal to the Hausdorff dimension of the set
of points with $\TLoopsum_z(\eps)\to\gp$ and $\TLoopcount_z(\eps
)\to
\lp$ as $\eps\to0$. Moreover,
\[
\lp(\gp) = \frac{\gp}{\sqrt{\pi/2}} \coth \biggl(\frac{\pi^2
\gp}{\sqrt{\pi/2}} \biggr),
\]
see Figure \ref{GFF_loop_profile}.
\end{theorem}

\subsection*{Outline}

We review large deviation estimates and give some basic overshoot
estimates for random walks in Section~\ref{secppreliminaries}. In
Section~\ref{secpCLE}, we give large deviation estimates on the
nesting of CLE loops, and show the CLE loops are well behaved in
certain senses that are useful when we prove the Hausdorff
dimension in Theorem~\ref{thmpmain}. It suffices to describe the
CLE nesting behavior at single points to give the upper bound
(Section~\ref{secpupperbound}). For the lower bound
(Section~\ref{secplowerbound}), we follow the strategy of studying
a subset of special points that have full dimension and are only
weakly correlated. We give a Hausdorff dimension lower bound proposition
that is cleaner than ones that have appeared earlier, and which allowed
for simplifications in the CLE calculations. In this section,
we also show that the points of maximal nesting are equinumerous with
$\R$.
In Section~\ref{secpweighted_loops}, we\vadjust{\goodbreak}
explain the proof of Theorem~\ref{thmparbitrary_distribution}, the
extension of Theorem~\ref{thmpmain} to the setting of weighted $\CLE$
loops. We also deduce Theorems \ref{thmpgff_maximum} and
\ref{thmpgff_maximum_support} as corollaries of this result.\vadjust{\goodbreak}

%s2 #&#
\section{Preliminaries}
\label{secppreliminaries}

In Section~\ref{subsecpFL}, we review some
facts from large deviations, and then in Section~\ref{subsecpgeneral} we
collect several estimates for random walks.

%s2.1 #&#
\subsection{Large deviations}
\label{subsecpFL}

We review some basic results from the theory of large deviations,
including the Fenchel--Legendre transform and\break 
\hyperref[thmpcramer]{Cram\'er's
theorem}. Let $\mu$ be a probability measure on $\R$.
The logarithmic moment generating function, also known as the cumulant
generating function, \label{notpfenchel_two} of $\mu$ is defined by
\[
\Lambda(\mgfparam) = \Lambda_\mu(\mgfparam) = \log\E
\bigl[e^{\mgfparam X} \bigr] ,
\]
where $X$ is a random variable with law $\mu$. The Fenchel--Legendre
transform $\Lambda^\star\dvtx \mathbb{R} \to[0,\infty]$ of $\Lambda$
is given
by \cite{DZ}, Section~2.2
\[
\Lambda^\star(x)\colonequals\sup_{\mgfparam\in\R}\bigl(
\mgfparam x - \Lambda(\mgfparam)\bigr).
\]

We now recall \hyperref[thmpcramer]{Cram\'er's
theorem} in $\R$, as stated in \cite{DZ},
Theorem~2.2.3.

%th2.1 #&#
\begin{theorem}[(Cram\'er's theorem)] \label{thmpcramer} Let $X$ be a
real-valued random variable and let $\Lambda$ be the logarithmic moment
generating function of the distribution of $X$. Let $S_n=\sum_{i=1}^n
X_i$ be a sum of i.i.d. copies of $X$. For every closed set $F \subset
\R$ and open set $G\subset\R$, we have
\begin{eqnarray*}
\limsup_{n\to\infty} \frac{1}{n} \log\P \biggl[
\frac{1}{n} S_n \in F \biggr] &\leq& -\inf_{y\in F}
\Lambda^\star(y)\quad\mbox{and}
\\
\liminf_{n\to\infty} \frac{1}{n} \log\P \biggl[
\frac{1}{n} S_n \in G \biggr] &\geq& -\inf_{y\in G}
\Lambda^\star(y).
\end{eqnarray*}
Moreover,
%
%e2.1 #&#
\begin{equation}
\label{eqnpcramer_strong} \P \biggl[\frac{1}{n} S_n \in F \biggr] \leq2
\exp \Bigl(-n\inf_{y\in F} \Lambda^\star(y) \Bigr).
\end{equation}
\end{theorem}

Following \cite{DZ}, Section~2.2.1, we let $\CD_\Lambda\colonequals
\{\mgfparam\dvtx  \Lambda(\mgfparam) < \infty\}$ and $\CD_{\Lambda
^\star} = \{ x
\dvtx \break  \Lambda^\star(x) < \infty\}$ be the sets where $\Lambda$ and
$\Lambda^\star$
are finite, respectively, and let $\CF_\Lambda= \{\Lambda'(\mgfparam)
\dvtx \mgfparam\in\CD^\circ_\Lambda\}$, where $A^\circ$ denotes the
interior of a set $A\subset\R$. The following proposition summarizes some
basic properties of $\Lambda$ and $\Lambda^\star$.

%pr2.2 #&#
\begin{proposition}
\label{propptransform_properties}
Suppose that $\mu$ is a probability measure on $\R$, let $\Lambda$
be its log moment generating function, and assume that $\CD_\Lambda
\neq\{0\}$. Let $a$ and $b$ denote the essential infimum and
supremum of a $\mu$-distributed random variable $X$ (with $a=-\infty$
and/or $b=\infty$ allowed). Then $\Lambda$ and its
Fenchel--Legendre transform $\Lambda^\star$ have the following properties:
\begin{longlist}[(viii)]
\item[(i)]$\Lambda$ and $\Lambda^\star$ are convex; %\label{itempconvex}
\item[(ii)]$\Lambda^\star$ is nonnegative;\vadjust{\goodbreak}
\item[(iii)]$\CF_\Lambda\subset\CD_{\Lambda^\star}$;
\item[(iv)]$\Lambda$ is smooth on $\CD_\Lambda^\circ$ and
$\Lambda^\star$ is smooth on $\CF_\Lambda^\circ$; %\label{itempsmooth}
\item[(v)] If $\CD_\Lambda= \R$, then
$\CF_\Lambda^\circ=(a,b)$; %\label{itempinterior}
\item[(vi)] If $(-\infty,0]\subset\CD_\Lambda$, then
$(a,a+\delta)\subset\CF_\Lambda^\circ$ for some $\delta>0$; %\label{itempinteriora}
\item[(vii)] If $[0,\infty)\subset\CD_\Lambda$, then
$(b-\delta,b)\subset\CF_\Lambda^\circ$ for some
$\delta>0$; %\label{itempinteriorb}
\item[(viii)]$\Lambda^\star$ is continuously differentiable on $(a,b)$;
%\label{itempdiff}
%
\item[(ix)] If $-\infty< a$, then $(\Lambda^\star)'(x) \to-\infty$ as
$x\downarrow a$;
%\label{itempboundary_derivative_a}
%
\item[(x)] If $b <\infty$, then $(\Lambda^\star)'(x) \to+\infty$ as $x
\uparrow b$.
%\label{itempboundary_derivative_b}
\end{longlist}
\end{proposition}
\begin{pf}
For (i)--(iv), we refer the reader to
\cite{DZ}, Section~2.2.1.

To prove (v), note that
\[
\Lambda'(\lambda) = \frac{\E[ X e^{\lambda X}]}{\E[ e^{\lambda X}]}.
\]
Therefore,
\[
a= \frac{\E[a e^{\lambda X}]}{\E[e^{\lambda X}]} \leq\Lambda '(\lambda) \leq
\frac{\E[b e^{\lambda X}]}{\E[e^{\lambda X}]} = b.
\]
Thus, $\CF_\Lambda\subseteq[a,b]$, which gives $\CF_\Lambda^\circ
\subseteq(a,b)$.

This leaves us to prove the reverse inclusion.
Suppose $c\in(a,b)$, and let $Y=X-c$.

Then
\[
\Lambda'(\lambda) =\frac{\E[X e^{\lambda X}]}{\E[e^{\lambda X}]} =c + \frac{\E[Y e^{\lambda Y}]}{\E[e^{\lambda Y}]} = c
+ \frac{\E[\one_{\{Y \geq0\}} Y e^{\lambda Y}] + \E[\one_{\{Y
< 0\}} Y e^{\lambda Y}]}{
\E[e^{\lambda Y}]}.
\]
Since $\CD_\Lambda=\R$, the tails of $X$ and $Y$ decay rapidly
enough for each
of the above expected values to be finite.
Since $\P[Y>0]>0$, the first term in the numerator
diverges as $\lambda\to\infty$, while the second term decreases
monotonically in absolute value.
So for sufficiently large $\lambda$, we have $\Lambda'(\lambda)>c$.
Similarly, for sufficiently large negative $\lambda$ we have
$\Lambda'(\lambda)\leq c$. Since $\Lambda$ is smooth, $\Lambda'$ is
continuous, so $c\in\CF_\Lambda$. The proofs of (vi) and
(vii) are analogous.

To prove (viii), note that $\mathcal{F}_\Lambda^\circ=
(\tilde{a},\tilde{b})$ for some $a\leq\tilde{a} < \tilde{b}\leq
b$. By
(iv), $\Lambda^\star$ is smooth on
$(\tilde{a},\tilde{b})$. Therefore, it suffices to consider the possibility
that $a<\tilde{a}$ or $\tilde{b}<b$. Suppose first that $\tilde
{b}<b$. By
the proof of (v), $\tilde{b}<b$ implies that
$\CD_\Lambda= (\lambda_1,\lambda_2)$ for some
$\lambda_2<\infty$. Furthermore, observe that $\Lambda'(\lambda)
\to
\tilde{b}$ as $\lambda\nearrow\lambda_2$. It follows that
$\Lambda(\lambda_2)<\infty$, and by convexity of $\Lambda$ we have
for all
$\tilde{b} \leq x < b$,
\[
\Lambda^\star(x) = \sup_{\lambda} \bigl[x\lambda-
\Lambda(\lambda)\bigr] = x\lambda_2 - \Lambda(\lambda_2).
\]
In other words, $\Lambda^\star$ is smooth on $(\tilde{a},\tilde
{b})$ and is
affine on $(\tilde{b},b)$ with slope matching the left-hand derivative at
$\tilde{b}$. Similarly, if $a<\tilde{a}$, then $\Lambda^\star$ is
affine on
$(a,\tilde{a})$ with slope matching the right-hand derivative of
$\Lambda^\star$ at $\tilde{a}$. Therefore, $\Lambda^\star$ is continuously
differentiable on $(a,b)$.

To prove (ix), we note that since $X$
is bounded below, $D_\Lambda^\circ= (-\infty,\xi)$ for some $0 \leq
\xi
\leq+\infty$. Moreover, there exists $\eps> 0$ so that $(a,a+\eps)
\subset\mathcal{F}_\Lambda^\circ$, by essentially the same argument we
used to prove (v) above. Let
$\wh{\mathcal{D}}=\{\lambda\dvtx  \Lambda'(\lambda) \in(a,a+\eps)\}$,
and note that the left endpoint of $\wh{\mathcal{D}}$ is
$-\infty$. Since $\Lambda'$ is smooth and strictly increasing on
$\wh{\mathcal{D}}$ (see \cite{DZ}, Exercise 2.2.24), there exists a
monotone bijective function $\lambda\dvtx  (a,a+\eps) \to\wh{\mathcal{D}}$
for which $\Lambda'(\lambda(x)) = x$.
In the definition of $\Lambda^\star(x)$, the supremum is achieved
at $\lambda=\lambda(x)$. Differentiating, we obtain
%
%e2.2 #&#
%e2.3 #&#
%e2.4 #&#
\begin{eqnarray}
\label{eqpLambda-star-prime} \bigl(\Lambda^\star\bigr)'(x) &=&
\frac{d}{dx} \bigl[x \lambda(x) - \Lambda \bigl(\lambda(x)\bigr) \bigr]\nonumber
\\
&= &\lambda(x) + x \lambda'(x) - \lambda'(x)
\Lambda'\bigl(\lambda(x)\bigr)
\\
&=& \lambda(x).\nonumber
\end{eqnarray}
Since the monotonicity of $\lambda$ implies that $\lambda(x) \to
-\infty$
as $x \to a$, this completes the proof.

The proof of (x) is similar.
\end{pf}

We also have the following adaptation of \hyperref[thmpcramer]{Cram\'er's
theorem} for which the
number of i.i.d. summands is not fixed.

%co2.3 #&#
\begin{corollary} \label{corphow_many_summands}
Let $X$ be a positive real-valued random variable with exponential tails
(i.e., $\E[e^{\lambda_0 X}]<\infty$ for some $\lambda_0>0$), and let
$\Lambda(\lambda)=\break \log\E[e^{\lambda X}]$.
Let $S_n=\sum_{i=1}^n X_i$ be a sum
of i.i.d. copies of $X$, and let $N_r=\min\{n\dvtx S_n\geq r\}$. If
$0<\nu_1<\nu_2$, then
%
%e2.5 #&#
\begin{equation}
\label{eqnphow_many_summands} \lim_{r\to\infty} \frac{1}{r}\log\P[
\nu_1 r \leq N_{r} \leq\nu _2 r] = -\inf
_{\nu\in[\nu_1,\nu_2]}\nu\Lambda^\star(1/\nu).
\end{equation}
\end{corollary}

This is the origin of the expression $\nu\Lambda^\star(1/\nu)$ in
\eqref{eqnpg(nu)}.
\begin{pf}
Note that in the formula
\[
\Lambda^\star(x)= \sup_\lambda\bigl[\lambda x -
\Lambda(\lambda)\bigr] ,
\]
the bracketed expression is $0$ when $\lambda=0$. Because $X$ has
exponential tails, $\Lambda'(0)=\E[X]$ exists. If $x<\E[X]$, then for
some sufficiently small negative $\lambda$, the bracketed expression is
positive, so $\Lambda^\star(x)>0$. Likewise, if $x>\E[X]$ then
$\Lambda^\star(x)>0$. We also have $\Lambda^\star(\E[X])=0$.

Let $a=\essinf X \in[0,\infty)$ be the essential infimum of $X$ and
$b=\esssup X\in(0,\infty]$ be the essential supremum of $X$.

Because $\Lambda^\star$ is convex on $[a,b]$, by Lemma~\ref{lempconvex}
proved below, $\nu\Lambda^\star(1/\nu)$ is convex on $[1/b,1/a]$. The
expression $\nu\Lambda^\star(1/\nu)$ is $0$ when $\nu=1/\E[X]$
and is
positive elsewhere on $[1/b,1/a]$, so it is strictly decreasing for
$\nu\leq1/\E[X]$ and strictly increasing for $\nu\geq1/\E[X]$.

There are three possible cases for the relative order of $1/\E[X]$,
$\nu_1$, and $\nu_2$. For example, suppose $\nu_1<\nu_2<1/\E[X]$. We
write
\[
\{\nu_1 r \leq N_r \leq\nu_2 r\} = \biggl\{
\sum_{1 \leq i \leq
\lceil\nu_1 r\rceil-1}X_i < r \biggr\} \cap \biggl\{
\sum_{1 \leq i \leq\lfloor\nu_2 r\rfloor
} X_i \geq r \biggr\}
\equalscolon E_r \cap F_r.
\]
Since $\nu\Lambda^\star(1/\nu)$ is continuous on $(1/b,1/a)$, by
\hyperref[thmpcramer]{Cram\'er's
theorem},
\[
\P\bigl[E_r^c\bigr]=e^{-\nu_1 r\Lambda^\star(1/\nu_1)(1+o(1))} \quad\mbox{and}\quad
\P[F_r]=e^{-\nu_2 r\Lambda^\star(1/\nu_2)(1+o(1))} ,
\]
except when $1/\nu_1=1/b$, in which case the expression for $\P
[E_r^c]$ becomes an upper bound.
Therefore,
\[
\P[E_r \cap F_r] = P[F_r] - \P
\bigl[F_r\cap E_r^c\bigr] = e^{-\nu_2 r\Lambda
^\star(1/\nu_2)(1+o(1))}
,
\]
which gives \eqref{eqnphow_many_summands}. The proof for the case
$1/\E[X]<\nu_1<\nu_2$ is analogous, and in the case $\nu_1<1/\E
[X]<\nu_2$,
both sides of \eqref{eqnphow_many_summands} are 0.
\end{pf}

%le2.4 #&#
\begin{lemma} \label{lempconvex} Suppose that $f$ is a convex
function on
$[a,b]\subseteq[0,\infty]$. Then $x\mapsto x f(1/x)$ is a convex
function on $[1/b,1/a]$.
\end{lemma}
\begin{pf}
Since $f$ is convex, it can be expressed as $ f(x) = \sup_i (\alpha
_i +
\beta_i x) $ for some pair of sequences of reals $\{\alpha_i\}_{i\in
\N}$
and $\{\beta_i\}_{i\in\N}$. For $x\in[0,\infty]$ we can write $ x
f(1/x) =
\sup_i (\alpha_i x + \beta_i)$, so it too is convex.
\end{pf}

%pr2.5 #&#
\begin{proposition} \label{propplimit-at-zero}
Let $X$ be a nonnegative real-valued random variable, and
let $\Lambda(\lambda)=\log\E[e^{\lambda X}]$.
Then
%
%e2.6 #&#
\begin{equation}
\lim_{\nu\downarrow0} \nu\Lambda^\star(1/\nu) = \sup\bigl\{
\lambda\dvtx \Lambda(\lambda)<\infty\bigr\}.
\end{equation}
\end{proposition}
\begin{pf}
Let $\lambda_0=\sup\{\lambda\dvtx \Lambda(\lambda)<\infty\}$, and note that
$0\leq\lambda_0\leq\infty$. Recall that $\Lambda^\star(x)
\colonequals
\sup_\mgfparam(\mgfparam x - \Lambda(\mgfparam))$, so
\[
\nu\Lambda^\star(1/\nu) = \sup_\mgfparam\bigl(
\mgfparam- \nu\Lambda (\mgfparam)\bigr).
\]
The supremum is not achieved for any $\lambda>\lambda_0$. If
$\lambda_0>0$, then $\E[X]<\infty$ and for $\nu\leq1/\E[X]$ the
supremum is achieved over the set $\lambda\geq0$
(\cite{DZ}, Lemma~2.2.5(b)). For any $\lambda\geq0$ we have
$\Lambda(\lambda)\geq0$, so $\nu\Lambda^\star(1/\nu) \leq
\lambda_0$ for
$0<\nu\leq1/\E[X]$. On the other hand, for any
$\lambda<\lambda_0$ we have $\Lambda(\lambda)<\infty$, so
$\liminf_{\nu\downarrow0}\nu\Lambda^\star(1/\nu) \geq\lambda$. Thus,
$\lim_{\nu\downarrow0} \nu\Lambda^\star(1/\nu) = \lambda_0$ when
$\lambda_0>0$.

Next, suppose $\lambda_0=0$. Then the supremum is achieved over the set
$\lambda\leq0$, for which $\Lambda(\lambda)\leq0$. For any
$\varepsilon>0$, there is a $\delta>0$ for which $-\varepsilon\leq
\Lambda(\lambda)\leq0$ whenever $-\delta\leq\lambda\leq0$. Since
$\lambda_0=0$, $\Pr[X=0]<1$, so $\Lambda(-\delta)<0$. Let $\nu_0 =
-\delta/ \Lambda(-\delta)$. By the convexity of $\Lambda$, for
$0\leq\nu
\leq\nu_0$, the supremum is achieved for $\lambda\in[-\delta,0]$. For
$\lambda$ in this range, $\lambda-\nu\Lambda(\lambda) \leq
\varepsilon
\nu$, so $0\leq\nu\Lambda^\star(1/\nu)\leq\varepsilon\nu$ when
$0<\nu\leq\nu_0$. Hence, $\lim_{\nu\downarrow0} \nu\Lambda
^\star(1/\nu) =
0$ when $\lambda_0=0$.
\end{pf}

We conclude by giving a parametrization of the graph of the function
$\gamma_\kappa$ over the interval $(0,\infty)$.

%pr2.6 #&#
\begin{proposition} \label{proppparameterization} Recall the
definition of
$\Lambda_\kappa$ in \eqref{eqnpLambdaR}. The graph of $\gamma
_\kappa$
over the interval $(0,\infty)$ is equal to the set
%
%e2.7 #&#
\begin{equation}
\label{eqpparam} \biggl\{ \biggl(\frac{1}{\Lambda_\kappa'(\lambda)},\lambda- \frac{\Lambda_\kappa(\lambda)}{\Lambda_\kappa'(\lambda)}
\biggr) \dvtx -\infty< \lambda < 1 - \frac{2}{\kappa}-\frac{3\kappa}{32} \biggr
\}.
\end{equation}
\end{proposition}

\begin{pf}
Recall that $\Lambda_\kappa^\star(x) = \sup_{\lambda\in\R}
[\lambda x -
\Lambda_\kappa(\lambda)]$. Since $\Lambda_\kappa'$ is continuous and
strictly increasing, the maximizing value of $\lambda$ for a given value
of $x$ is the unique $\lambda\in\R$ such that $\Lambda_\kappa
'(\lambda)
= x$. If we let $\lambda$ be this maximizing value, then we have
%
%e2.8 #&#
\begin{equation}
\label{eqpxlambdax} \Lambda_\kappa^\star(x) = \lambda x -
\Lambda_\kappa(\lambda).
\end{equation}
Differentiating \eqref{eqnpLambdaR} shows that as $\lambda$ ranges from
$-\infty$ to $1-2/\kappa-3\kappa/32$, $\Lambda_\kappa'(\lambda)$ ranges
from 0 to $\infty$. Using \eqref{eqpxlambdax} and writing $\nu=
1/x$, we
obtain
\[
\nu= \frac{1}{x} = \frac{1}{\Lambda_\kappa'(\lambda)}
\]
and
\[
\nu\Lambda_\kappa^\star(1/\nu) = \frac{1}{\Lambda_\kappa
'(\lambda)} \bigl(
\lambda \Lambda_\kappa'(\lambda) - \Lambda_\kappa(
\lambda) \bigr)= \lambda- \frac{\Lambda_\kappa(\lambda)}{\Lambda_\kappa'(\lambda)}.
\]
Therefore, $\{(\nu,\nu\Lambda_\kappa^\star(1/\nu)) \dvtx 0<\nu
<\infty\}$ is
equal to \eqref{eqpparam}.
\end{pf}

%pr2.7 #&#
\begin{proposition} \label{proppstrictly_convex}
The function $\gamma_\kappa$ is strictly convex over $[0,\infty)$.
\end{proposition}

\begin{pf}
Define $x(\lambda) = 1/\Lambda_\kappa'(\lambda)$ and $y(\lambda) =
\lambda- \Lambda_\kappa(\lambda)/\Lambda_\kappa'(\lambda)$. By
Proposition~\ref{proppparameterization}, the second derivative of
$\gamma_\kappa$
is given by
%
%e2.9 #&#
\begin{eqnarray}
\label{eqpsecond_derivative}&&\biggl ({\frac{d}{d\lambda} \biggl[\frac{y'(\lambda)}{x'(\lambda
)} \biggr]}\biggr)\Big/\bigl({x'(\lambda)}\bigr) \nonumber\\
&&\qquad= \biggl(8\pi^2 \sin^2 \biggl(\frac{\pi}{\kappa}\sqrt{8\kappa
\lambda+
(\kappa-4)^2} \biggr)\tan\biggl (\frac{\pi}{\kappa}\sqrt{8\kappa
\lambda+
(\kappa-4)^2} \biggr)\biggr)\\
&&\qquad\quad{}\Big/\biggl(2\pi\sqrt{8\kappa\lambda+ (\kappa-4)^2} -
\kappa\sin \biggl(\frac{2\pi}{\kappa}\sqrt{8\kappa\lambda
+(\kappa-4)^2} \biggr)\biggr).\nonumber
\end{eqnarray}
It is straightforward to confirm that $\sin^2t \tan t/(2t-\sin(2t))>0$
for all $t\in[0,\pi/2)$ (where we extend the definition to $t=0$ by
taking the limit of the expression as \mbox{$t\searrow0$}). Similarly,
$\sinh^2(2t)\tanh(t)/(\sinh(2t)-2t)>0$ for all $t\leq0$ (again extending
to $t=0$ by taking a limit). Setting
$t=\frac{\pi}{\kappa}\sqrt{8\kappa\lambda+ (\kappa-4)^2}$, these
observations imply that the second derivative of $\gamma_\kappa$ is
positive for all
$\lambda$ less than $1-2/\kappa-3\kappa/32$.
\end{pf}

%s2.2 #&#
\subsection{Overshoot estimates}
\label{subsecpgeneral}

Let $\{S_n\}_{n \in\N}$ be a random walk in $\R$ whose increments are
nonnegative and have exponential moments. In this section, we will
bound the tails of $S_n$ stopped at:
\begin{longlist}[(ii)]
\item[(i)] the first time that it exceeds a given threshold (Lemma~\ref
{lempfirsthitting}) and at
\item[(ii)] a random time which is stochastically dominated by a geometric
random variable (Lemma~\ref{lempgeodom}).
\end{longlist}

%le2.8 #&#
\begin{lemma}
\label{lempfirsthitting}
Suppose $\{X_j\}_{j\in\N}$ are nonnegative i.i.d.
random variables for which $\E[X_1] > 0$ and $\E[e^{\mgfparam_0
X_1}]<\infty$ for some $\mgfparam_0>0$.
Let $S_n=\sum_{j=1}^n X_j$
and $\tau_x =
\inf\{n \geq0 \dvtx S_n \geq x\}$. Then there exists $C>0$ (depending on
the law of $X_1$ and $\lambda_0$) such that $\P[S_{\tau_{x}} - x
\geq\alpha] \leq C\exp(-\mgfparam_0
\alpha)$ for all $x\geq0$ and $\alpha>0$.
\end{lemma}
\begin{pf}
Since $\E[X_1]>0$, we may choose $v>0$ so that $\P[X_1\geq v]\geq
\frac{1}2$.
We partition $(-\infty,x)$ into intervals of length $v$:
\[
(-\infty,x)=\bigcup_{k=0}^\infty
I_k \qquad\mbox{where } I_k=\bigl[x-(k+1) v,x-k v\bigr).
\]
Then we partition the event $S_{\tau_x}-x\geq\alpha$ into subevents:
\[
\{S_{\tau_x}-x\geq\alpha\} = \bigcup_{n=0}^\infty
\bigcup_{k=0}^\infty E_{n,k}
\qquad\mbox{where } E_{n,k}=\{ S_n \in I_k,
S_{n+1}\geq x+\alpha\}.
\]
The event $E_{n,k}$ implies $X_{n+1}\geq k v + \alpha$, and since
$X_{n+1}$ is independent of $S_n$, we have
\[
\P[E_{n,k}] \leq\P[S_n\in I_k] \times
\frac{\E[e^{\mgfparam_0
X}]}{e^{\mgfparam_0 (k v + \alpha)}}.
\]
On the event $S_n\in I_k$, since $X_{n+1}$ is independent of what
occurred earlier and is larger than $v$ with probability at least
$\frac{1}2$, we have $\P[S_{n+1}\in I_k|S_n\in I_k,S_{n-1},\ldots
,S_1]\leq\frac{1}2$. Thus,
\[
\sum_{n=0}^\infty\P[S_n\in
I_k] =\E\bigl[ \bigl|\{n \dvtx S_n \in I_k\}\bigr|
\bigr] \leq2.
\]
Thus,
\[
\P[S_{\tau_x}-x\geq\alpha] \leq\sum_{k=0}^\infty
\frac{2 \E[e^{\mgfparam_0 X}]}{e^{\mgfparam
_0(kv + \alpha)}} \leq\frac{2 \E[e^{\mgfparam_0 X}]}{1-e^{-\mgfparam_0 v}} \times e^{-\mgfparam_0\alpha}.%\qedhere
\]
\upqed\end{pf}

%le2.9 #&#
\begin{lemma}
\label{lempgeodom}
Let $\{X_j\}_{j\in\N}$ be an i.i.d. sequence of random variables
and let $S_n=\sum_{i=1}^n X_i$. Let $N$ be a positive integer-valued
random variable, which need not be independent of the $X_j$'s.
Suppose that there exists $\mgfparam_0>0$ for which $\E[e^{\mgfparam_0
X_1}]<\infty$ and $q \in(0,1)$ for which $\P[ N \geq k] \leq
q^{k-1}$ for every $k \in\N$. Then there exist constants $C,c>0$
(depending on $q$ and the law of $X_1$) for which
$\P[S_N>\alpha] \leq C \exp(-c \alpha)$ for every $\alpha> 0$.
\end{lemma}
\begin{pf}
Since $q \E[e^{\lambda X}]$ is a continuous function of $\lambda$
which is finite for $\lambda=\lambda_0>0$ and less than $1$ for
$\lambda=0$, there is some $c>0$ for which $q \E[e^{2 c X}]<1$. The
Cauchy--Schwarz inequality gives
\begin{eqnarray*}
\E\bigl[e^{c S_N}\bigr] &=& \E \Biggl[ \sum_{k=1}^{\infty}
e^{c S_k} \mathbf {1}_{\{N=k\}} \Biggr]
\\
&\leq&\sum_{k=1}^{\infty} \sqrt{\E
\bigl[e^{2 c S_k}\bigr] \P[{N=k}]} = q^{-1/2} \sum
_{k=1}^{\infty} \bigl(\sqrt{\E\bigl[e^{2 c X_1}
\bigr] q} \bigr)^k < \infty.
\end{eqnarray*}
We conclude using the Markov inequality $\P[S_N > \alpha] \leq e^{-c
\alpha} \E[e^{c S_N}]$.
\end{pf}

%s3 #&#
\section{CLE estimates}
\label{secpCLE}
In Section~\ref{subsecpcle_est}, we apply the estimates from
Section~\ref{secppreliminaries} to obtain asymptotic loop
nesting probabilities for CLE.
Then in Section~\ref{secpregularity_cle} we show that CLE
loops are well behaved in a certain sense that will allow
us to do the two-point estimates that we need for the Hausdorff dimension.

%s3.1 #&#
\subsection{CLE nesting estimates}
\label{subsecpcle_est}

We establish two technical estimates in this subsection.
Lemma~\ref{lempasymp_probability_disk} uses \hyperref[thmpcramer]{Cram\'er's
theorem} to compute
the asymptotics of the probability that the number $\Loopcount_z(\eps
)$ of CLE loops surrounding $B(z,\eps)$ has a certain
rate of growth as $\eps\to0$.

In preparation for the proof of Lemma~\ref{lempasymp_probability_disk}
below, we establish the continuity of the function $\gamma_\kappa$
defined in
\eqref{eqnpg(nu)}. Throughout, we let
$\lp_{\max}$ be the unique solution to $\gamma_\kappa(\nu) = 2$.

%pr3.1 #&#
\begin{proposition} \label{propplimit_at_zero}
The function $\gamma_\kappa$ is continuous. In particular,
%
%e3.1 #&#
\begin{equation}
\label{eqnplower_limit} \lim_{\lp\downarrow0} \gamma_\kappa(\lp) = 1 -
\frac{2}{\kappa} - \frac
{3\kappa}{32}.
\end{equation}
\end{proposition}

The quantity on the right-hand side of \eqref{eqnplower_limit} is two
minus the
almost-sure Hausdorff dimension of the $\CLE_\kappa$ gasket
\cite{SSW,NW,MSW}.

\begin{pf*}{Proof of Proposition~\ref{propplimit_at_zero}}
The continuity of $\gamma_\kappa$ on $(0,\infty)$ follows from
Proposition~\ref{propptransform_properties}(iv), and
the continuity at $0$ follows from Proposition~\ref{propplimit-at-zero}
and the fact that \eqref{eqnpLambdaR} blows up at $\lambda_0=1 -
\frac{2}{\kappa} - \frac{3\kappa}{32}$ but not for $\lambda
<\lambda_0$.
\end{pf*}

%le3.2 #&#
\begin{lemma}
\label{lempasymp_probability_disk}
Let $\kappa\in(8/3,8)$, $0 \leq\lp\leq\lp_{\mathrm{max}}$, and $0
< a
\leq b$. Then for all functions $\eps\mapsto\delta(\eps)$ decreasing
to 0 sufficiently slowly as $\eps\to0$ and for all proper simply
connected domains $D$ and points $z\in D$ satisfying $a \leq\confrad(z;D)
\leq b$, we have
%
%e3.2 #&#
\begin{equation}
\label{eqnpcount_estimate} \cases{ %
\displaystyle\lim
_{\eps\to0} \frac{\log\P [\lp\leq
\TLoopcount_z(\eps) \leq\lp+\delta(\eps) ]}{\log\eps} =\gamma_\kappa(\lp), &\quad
$\mbox{for }\lp> 0,$
\vspace*{2pt}\cr
\displaystyle\lim_{\eps\to0}\frac{\log\P [({1}/{2})\delta(\eps) \leq
\TLoopcount_z(\eps) \leq\delta(\eps)  ]}{\log\eps} = \gamma_\kappa (0),
& \quad$\mbox{for }\lp= 0,$}
\end{equation}
where $\gamma_\kappa$ is defined in \eqref{eqnpg(nu)}, and the
convergence is
uniform in the domain $D$.
\end{lemma}

\begin{pf}
Let $\{T_i\}_{i\in\N}$ \label{notpTi} be the sequence of $\log$
conformal radius increments associated with $z$. That is, defining
$U_z^i$ to be the connected component of $D \setminus\Loop_z^i$ which
contains $z$, we have $T_i \colonequals\log\confrad
(z;U_z^{i-1})-\log
\confrad(z;U_z^i)$.
Let
\[
S_n\colonequals\sum_{i=1}^n
T_i= \log\confrad(z;D) - \log\confrad \bigl(z;U_z^n
\bigr) \qquad\mbox{for } n \in\N.
\]
As in Corollary~\ref{corphow_many_summands}, we let $N_r=\min\{
n\dvtx S_n\geq r\}$.

By the Koebe one-quarter theorem
and the hypotheses of the lemma, we have
%
%e3.3 #&#
\begin{equation}
\label{eqnpcr_bounds} \log(a/4) - \log\inrad\bigl(z;U_z^n\bigr)
\leq S_n \leq\log b - \log\inrad\bigl(z;U_z^n
\bigr).
\end{equation}

Suppose first that $\lp>0$. Let
\begin{eqnarray*}
E &\colonequals&\bigl\{(\lp+\eta)\log(1/\eps) \leq N_{\log(a/4)+\log(1/\eps)} \bigr\}\quad
\mbox{and}
\\
F &\colonequals&\bigl\{N_{\log(b)+\log(1/\eps)} \leq(\lp +\delta_0-\eta)
\log(1/\eps)\bigr\}.
\end{eqnarray*}
It follows from \eqref{eqnpcr_bounds} that
for all fixed $\delta_0>0$ and $0<\eta<\delta_0/2$ and for all
$\eps=\eps(\eta)>0$ sufficiently small, we have
\[
\bigl\{\lp\leq\TLoopcount_z(\eps) \leq\lp+\delta_0\bigr
\} \supset E \cap F.
\]
By Corollary~\ref{corphow_many_summands}, $\log\P[E]/\log\eps\to
\inf_{\xi\in[\nu+\eta,\nu+\delta_0-\eta]}\gamma_\kappa(\xi
)$. Furthermore,\break 
\hyperref[thmpcramer]{Cram\'er's
theorem} implies that $\P[F \given E]=\eps^{o(1)}$. It follows
that
\[
\liminf_{\eps\to0} \frac{\log\P [\lp\leq
\TLoopcount_z(\eps) \leq\lp+\delta_0 ]}{\log\eps} \geq \inf
_{\xi\in[\lp+\eta,\lp+\delta_0-\eta]}\gamma_\kappa(\xi).
\]
Letting $\eta\to0$ and using an analogous argument to upper bound the
limit supremum of the quotient on the left-hand side, we find that
\[
\lim_{\eps\to0} \frac{\log\P [\lp\leq
\TLoopcount_z(\eps) \leq\lp+\delta_0 ]}{\log\eps} = \inf_{\xi\in[\lp,\lp+\delta_0]}
\gamma_\kappa(\xi).
\]
By the continuity of $\gamma_\kappa$ on $[0,\infty)$, we may choose
$\delta(\eps)\downarrow0$ so that \eqref{eqnpcount_estimate} holds.
The proof for $\lp=0$ is similar. As above, we show that for
$\delta_0>0$ fixed, we have
\[
\lim_{\eps\to0} \frac{\log\P [\delta_0/2 \leq
\TLoopcount_z(\eps) \leq\delta_0  ]}{\log\eps} = \inf_{\xi\in[\delta_0/2,\delta_0]}
\gamma_\kappa(\xi).
\]
Again, choose $\delta(\eps)\downarrow0$ so that
\eqref{eqnpcount_estimate} holds.
\end{pf}

%s3.2 #&#
\subsection{Regularity of CLE}
\label{secpregularity_cle}

Let $D \subsetneq\C$ be a proper simply connected domain
and let $\Gamma$ be a $\CLE_\kappa$ in $D$. For each $z \in D$, let
$\Loop_z^j$ be the $j$th largest loop of $\Gamma$ which surrounds $z$.
In this section, we estimate the tail behavior of the number of
such loops $\Loop_z^j$ which intersect the boundary of a ball $B(z,r)$
in $D$.

%le3.3 #&#
\begin{lemma}
\label{lempno_intersection}
For each $\kappa\in(8/3,8)$
there exists $p_1 = p_1(\kappa) > 0$ such that
for any proper simply connected domain $D$ and $z \in D$,
\[
\P\bigl[ \Loop_z^1 \cap\partial D = \varnothing\bigr]
\geq p_1 > 0.
\]
\end{lemma}
\begin{pf}
If $\kappa\in(8/3,4]$, we can take $p_1 = 1$ since the loops of such
$\CLE$s almost surely do not intersect the boundary of $D$. Assume
$\kappa\in(4,8)$. By the conformal invariance of $\CLE$, the boundary
avoidance probability is independent of the domain $D$ and the point $z$,
so we take $D = \D$. Let $\eta= \eta^0$ be the branch of
the exploration process of $\Gamma$ targeted at $0$ and let $(W,V)$ be
the driving pair for $\eta$. Let $\tau$ be an almost surely positive and
finite stopping time such that $\eta|_{[0,\tau]}$ almost surely does not
surround 0 and $\eta(\tau) \neq V_\tau$ almost surely. Then
$\eta|_{[\tau,\infty)}$ evolves as an ordinary chordal $\SLE_\kappa$
process in the connected component of $\D\setminus\eta([0,\tau])$
containing $0$ targeted at $V_\tau$, up until disconnecting $V_\tau$ from
$0$. In particular, $\eta|_{(\tau,\infty)}$ almost surely intersects the
right-hand side of $\eta|_{[0,\tau]}$ before surrounding $0$. Since $\eta
$ is
almost surely not space filling \cite{RS05} and cannot trace itself, this
implies that, almost surely, there exists $z \in\Q^2 \cap\D$ such
that the probability that $\eta$ makes a clockwise loop around $z$ before
surrounding $0$ is positive. This in turn implies that with
positive probability, the branch $\eta^z$ of the exploration tree
targeted at $z$ makes a clockwise loop around $z$ before making a
counterclockwise loop around~$z$. By~\cite{SHE_CLE}, Lemma~5.2, this
implies $\P[ \Loop_z^1 \cap\partial\D= \varnothing] > 0$.
\end{pf}

Suppose that $D = \D$. By the continuity of $\CLE_\kappa$ loops,
Lemma~\ref{lempno_intersection}
implies there exists $r_0 = r_0(\kappa) < 1$ such that
%
%e3.4 #&#
\begin{equation}
\label{eqnpp1over2} \P\bigl[ \Loop_0^1 \subset
B(0,r_0)\bigr] \geq\frac{p_1}{2}.
\end{equation}

%le3.4 #&#
\begin{lemma}
\label{lemploop_contain_prop}
For each $\kappa\in(8/3,8)$ there exists $p = p(\kappa) > 0$
such that
for any proper simply connected domain $D$ and $z \in D$,
\[
\P\bigl[\Loop_z^2 \subseteq B\bigl(z,\dist(z,\partial D)
\bigr)\bigr] \geq p. % \mathclap{\smash{\raisebox{-30pt}{
\]
\end{lemma}

\includegraphics{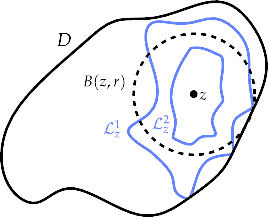}

% \caption{\qq{}.}\label{fig7}

\begin{pf}
Let $D_1$ be the connected component of $D \setminus\Loop_z^1$ which
contains $z$ and let $X = \confrad(z;D_1) / \confrad(z;D) \leq1$. Let
$\varphi\colon\D\to D_1$ be a conformal map with $\varphi(0)=z$, and
let $r=\dist(z,\partial D)$. By the Koebe one-quarter theorem,
we have $\confrad(z;D) \leq4r$, hence
\[
\bigl|\varphi'(0)\bigr| = \confrad(z;D_1) = \confrad(z;D) \cdot
\frac{\confrad(z;D_1)}{\confrad(z;D)} \leq4 X r.
\]
The variant of the Koebe distortion theorem which bounds $|f(z)-z|$
(see, e.g.,~\cite{Law05}, Proposition~3.26)
then implies for $|w| < r_0 < 1$, we have
%
%e3.5 #&#
\begin{equation}
\label{eqnpdistortion} \bigl|\varphi(w) - z\bigr| \leq4Xr \frac{r_0}{(1-r_0)^2}.
\end{equation}
Since the distribution of $-\log X$ has a positive density on
$(0,\infty)$ \cite{SSW}, the
probability of the event $E = \{X \leq(1-r_0)^2/(4 r_0) \}$ is
bounded below by $p_2=p_2(\kappa) > 0$ depending only $\kappa$.
On $E$, the right-hand side of \eqref{eqnpdistortion} is bounded
above by $r$,
that is, $\varphi(r_0 \D) \subset B(z,r)$.
By the conformal invariance and renewal property of CLE, the loop
$\Loop_z^2$ in $D$
is distributed as the image under $\varphi$ of the loop $\Loop_0^1$
in $\D$, which is independent of $X$.
Thus, by \eqref{eqnpp1over2},
$\P[ \Loop_z^2 \subseteq B(z,r)] \geq\P[E]\P[ \Loop_z^2 \subseteq
B(z,r) \given E] \geq(p_2)(p_1/2) \equalscolon p >0$.
\end{pf}

For the $\CLE_\kappa$ $\Gamma$ in $D$, $z\in D$ and $r>0$ we define
\begin{subequations}
\label{eqpJcapsubset} \label{notpJcapsubset}
%
%e3.6 #&#
%e3.7 #&#
\begin{eqnarray}
J^\cap_{z,r}&\colonequals&\min\bigl\{j \geq1 \dvtx
\Loop_z^j \cap B(z,r) \neq\varnothing\bigr\},
\\
J^\subset_{z,r}&\colonequals&\min\bigl\{j \geq1 \dvtx
\Loop_z^j \subset B(z,r) \bigr\}.
\end{eqnarray}
\end{subequations}

%co3.5 #&#
\begin{corollary}
\label{corploop_contain_stoch_dom}
$J^\subset_{z,r} - J^\cap_{z,r}$ is
stochastically dominated by $2\wt{N}$ where $\wt{N}$ is a geometric random
variable with parameter $p = p(\kappa) > 0$ which depends only on
$\kappa\in(8/3,8)$.
\end{corollary}
\begin{pf}
Immediate from Lemma~\ref{lemploop_contain_prop} and the renewal
property of $\CLE_\kappa$.
\end{pf}

%le3.6 #&#
\begin{lemma} \label{lempannulus-loop} For each $\kappa\in(8/3,8)$ there
exist $c_1 > 0$ and $c_2>0$ such that for any proper simply connected
domain $D$ and point $z\in D$, for any positive numbers $r$ and $R$ for
which $r<R$ and $B(z,R)\subset D$, a $\CLE_\kappa$ in $D$ contains a loop
$\Loop$ surrounding $z$ for which $\Loop\subset B(z,R)$ and $\Loop
\cap
B(z,r)=\varnothing$ with probability at least $1-(c_1 r/R)^{c_2}$.
\end{lemma}
\begin{pf}
For convenience, we let $x=\log(R/r)$ and rescale so that $R=1$.
For the $\CLE_\kappa$ $\Gamma$, let $\lambda_j=-\log\confrad
(\Loop_z^j(\Gamma))$.
By the renewal property of $\CLE_\kappa$, $\{\lambda_{j+1}-\lambda
_j\}$ form an i.i.d. sequence,
and their distribution has exponential tails~\cite{SSW}.
Now $\min(\{\lambda_j\} \cap(0,\infty))=\lambda_{J^\cap_{z,1}}$, which
by Lemma~\ref{lempfirsthitting}
is dominated by a distribution which has exponential tails
and depends only on $\kappa$. By \hyperref[thmpcramer]{Cram\'er's
theorem}, there
is a constant $c > 0$ so that $\lambda_{J^\cap_{z,1} + c x} \leq x -
\log4$ except with
probability exponentially small in $x$.
By Corollary~\ref{corploop_contain_stoch_dom},
$J^\subset_{z,1} - J^\cap_{z,1}$ is stochastically dominated by twice
a geometric random variable,
and so $J^\subset_{z,1} \leq J^\cap_{z,1} + c x$ except with
probability exponentially small in $x$.
If both of these high probability events occur, then $\Loop_{J^\subset
_{z,1}}\cap B(z,e^{-x}) = \varnothing$.
\end{pf}

%le3.7 #&#
\begin{lemma} \label{lempconditional-overshoot}
Let $X$ be a random variables whose law is the difference in log
conformal radii of successive $\CLE_\kappa$ loops.
Let $f_M$ denote the density function of $X-M$ conditional on $X\geq M$.
For some constant $C_\kappa$ depending only on $\kappa$,
\[
\sup_M f_M \leq C_\kappa\times
\exp \bigl[-(1-2/\kappa- 3\kappa/32)x \bigr].
\]
For all $M$ and all $x>1$,
the actual density is within a constant factor of this upper bound.
\end{lemma}
\begin{pf}
The density function for the law of $X$ is \cite{SSW}, equation (4)
\[
-\frac{\kappa\cos(4\pi/\kappa)}{4\pi}\sum_{j=0}^\infty(-1)^j
\biggl(j+\frac{1}2\biggr) \exp \biggl[-\frac{(j+{1}/{2})^2-(1-
{4}/{\kappa})^2}{8/\kappa}x \biggr].
\]
For large enough $x$, the first term dominates the sum of the other
terms. For small~$x$, a different formula (\cite{SSW}, Theorem~1), implies
that the density is bounded by a constant. Integrating, we obtain
$\P[X\geq M]$ to within constants,
and then obtain the conditional probability to within constants.
\end{pf}

%s4 #&#
\section{Nesting dimension}
\label{secpnesting-dimension}

In this section, we prove Theorem~\ref{thmpmain}, which gives the Hausdorff
dimension of the set $\Phi_\lp(\Gamma)$ for a $\CLE_\kappa$
$\Gamma$ in a
simply connected proper domain $D\subsetneq\C$. Define
\begin{eqnarray*}
\Phi^+_{\lp}(\Gamma) &\colonequals& \Bigl\{z\in D\dvtx \liminf
_{r\to
0}\TLoopcount_z(r;\Gamma)\geq\lp \Bigr\},
\\
\Phi^-_{\lp}(\Gamma) &\colonequals& \Bigl\{z\in D\dvtx \limsup
_{r\to
0}\TLoopcount_z(r;\Gamma)\leq\lp \Bigr\}.
\end{eqnarray*}
Then the sets $\Phi^{\pm}_{\lp}(\Gamma)$ are monotone in $\lp$, and
$\Phi_{\lp}(\Gamma) = \Phi^+_{\lp}(\Gamma) \cap\Phi^-_{\lp
}(\Gamma)$. (We
suppress $\Gamma$ from the notation when it is clear from context.)

%pr4.1 #&#
\begin{proposition} \label{proppPhi-conformal} $\Phi^+_{\lp}(\Gamma
)$ and
$\Phi^-_{\lp}(\Gamma)$ are invariant under conformal maps.
\end{proposition}

Conformal invariance of these CLE exceptional points is easier to prove
than conformal invariance of the thick points of the
Gaussian free field (\cite{HMP}, Corollary~1.4).
\begin{pf}
Let $\varphi\colon D \to D'$ be a conformal map, and let $\Gamma$ be a
$\CLE_\kappa$ in $D$; $\varphi(\Gamma)$ is a $\CLE_\kappa$ in
$D'$. By
the Koebe distortion theorem, for all $\eps> 0$ small enough
\[
\Loopcount_{z} \bigl(16 \eps\bigl|\varphi'(z)\bigr|^{-1};
\Gamma \bigr) \leq \Loopcount_{\varphi(z)}\bigl(\eps;\varphi(\Gamma)\bigr) \leq
\Loopcount_{z} \bigl(\tfrac{1}{16}\eps\bigl|\varphi'(z)\bigr|^{-1};
\Gamma \bigr).
\]
But
\begin{eqnarray*}
\liminf_{\eps\to0^+} \frac{\Loopcount_z(16^{\pm1}\eps|\varphi'(z)|^{-1};\Gamma)}{\log
(1/\eps)} & =& \liminf
_{\eps\to0^+} \frac{\Loopcount_z(\eps;\Gamma)}{\log
(1/(16^{\mp1}\eps|\varphi'(z)|))}
\\
&= &\liminf_{\eps\to0^+} \frac{\Loopcount_z(\eps;\Gamma)}{\log
(1/\eps)}.
\end{eqnarray*}
Thus,
\[
\liminf_{\eps\to0^+} \TLoopcount_z(\eps;\Gamma) =
\liminf_{\eps
\to0^+} \TLoopcount_{\varphi(z)}\bigl(\eps;\varphi(
\Gamma)\bigr) ,
\]
so $\varphi(\Phi^+_{\nu}(\Gamma)) = \Phi^+_{\nu}(\varphi(\Gamma
))$. Similarly,
$\varphi(\Phi^-_{\lp}(\Gamma)) = \Phi^-_{\nu}(\varphi(\Gamma))$.
\end{pf}

Observe that conformal maps preserve Hausdorff dimension: away from the
boundary, conformal maps are bi-Lipschitz, and the Hausdorff dimension
of a
countable union of sets is the maximum of the Hausdorff dimensions. So we
may restrict our attention to the case where the domain $D$ is the unit
disk $\D$.

%s4.1 #&#
\subsection{Upper bound}
\label{secpupperbound}

Let $\Gamma$ be a $\CLE_\kappa$ in $\D$. Here, we upper bound the Hausdorff
dimension of $\Phi^{\pm}_{\lp}(\Gamma)$. Recall that $\gamma
_\kappa$ is
defined in
\eqref{eqnpg(nu)} and that $\lp_{\mathrm{max}}$ is the unique value
of $\lp
\geq0$ such that $\gamma_\kappa(\lp) = 2$. Moreover, $\gamma
_\kappa(\lp) \in[0,2)$ for
$0 \leq
\lp< \lp_{\mathrm{max}}$.

%pr4.2 #&#
\begin{proposition}
\label{proppupper_bound}
If $0 \leq\lp\leq\lp_{\mathrm{typical}}$, then $\dim_\CH\Phi
^-_{\lp}(\CLE_\kappa)
\leq2-\gamma_\kappa(\lp)$ almost surely.
If $\lp_{\mathrm{typical}}\leq\lp\leq\lp_{\max}$, then $\dim
_\CH\Phi^+_{\lp}(\CLE
_\kappa) \leq2-\gamma_\kappa(\lp)$ almost surely.
If $\lp> \lp_{\max}$, then $\Phi^+_{\lp}(\CLE_\kappa) =
\varnothing$ almost surely.
\end{proposition}

\begin{pf}
Observe that the unit disk can be written as a countable union of
M\"obius transformations of $B(0,1/2)$. For example, for $q \in\D\cap
\Q^2$, define $\varphi_q$ to be the Riemann map for which
$\varphi_q(0)=q$ and $\varphi_q'(0)>0$. Then $\D= \bigcup_{q \in\D
\cap
\Q^2} \varphi_q(B(0,1/2))$. By M\"obius invariance, therefore, it
suffices to bound the Hausdorff dimension of $\Phi^{\pm}_{\lp} \cap
B(0,1/2)$ for a $\CLE_\kappa$ in $\D$. We will prove the result for
$\Phi^{+}_{\lp} \cap B(0,1/2)$, as $\Phi^{-}_{\lp} \cap B(0,1/2)$ is
similar.

To upper bound the Hausdorff dimension, it suffices to find good covering
sets. Let $r > 0$. Let $\CD^r$ be the set of open balls in $\C$ which
are centered at points of $r \Z^2 \cap B(0,1/2+r/\sqrt{2})$ and have
radius $(1+1/\sqrt{2})r$. For every point $z\in B(0,1/2)$, the closest
point in $r\Z^2$ to $z$ is the center of a ball $U\in\CD^r$ for which
$B(z,r)\subset U \subset B(z,(1+\sqrt{2})r)$.

For each ball $U \in\CD^r$, let $z(U)$ be the center of $U$.
We define
%
%e4.1 #&#
\begin{equation}
\label{eqnpUdef} \CU^{r,\lp+} \colonequals \bigl\{U \in\CD^r
\dvtx \TLoopcount_{z(U)}(r) \geq\lp \bigr\}. % \\
% \CU^{r,\lp-} &\colonequals\left\{U \in\CD^r:
% \TLoopcount_{z(U)}(r) \leq\lp\right\}.
\end{equation}

The conformal radius of $\D$ with respect to $z\in\D$ is $1-|z|^2$.
For $U
\in\CD^{r}$, we have $|z(U)|\leq1/2+r/\sqrt{2}$, so $\frac{1}2\leq
\confrad(z;\D) \leq1$ provided $r\leq1-1/\sqrt{2}$. Thus by
\hyperref[thmpcramer]{Cram\'er's theorem} (as in the proof of
Lemma~\ref{lempasymp_probability_disk}) and the continuity of
$\gamma_\kappa(\lp)$, for $\lp>\lp_{\mathrm{typical}}$ we have
\[
\P\bigl[U \in\CU^{r,\lp+}\bigr] \leq r^{\gamma_\kappa(\lp) + o(1)},
\]
%
% and for $\lp<\lptyp$ we have
% \[ \P[U \in\CU^{r,\lp-}] \leq r^{\g(\lp) + o(1)} , \]
where for fixed $\lp$, the $o(1)$ term tends to $0$ as $r\to0$,
uniformly in $U$.

Next, we define
%
%e4.2 #&#
\begin{equation}
\label{eqnpcover_def} \CC^{m,\lp+} \colonequals\bigcup
_{n \geq m} \CU^{\exp(-n),\lp+}.
\end{equation}

Suppose that $z \in\Phi^+_{\lp}(\Gamma)\cap B(0,1/2)$. Since
$\liminf_{\eps\to0} \TLoopcount_z(\eps) \geq\lp$, for any $\lp
'<\lp$, for
all large enough $n$, $\Loopcount_z((1+\sqrt{2})e^{-n}) \geq\lp' n$.
There is a ball $U\in\CU^{e^{-n}}$ for which $U\subset B(z,(1+\sqrt{2})
e^{-n})$, and so $\Loopcount_{z(U)}((1+1/\sqrt{2})e^{-n}) \geq
\Loopcount_z((1+\sqrt{2})e^{-n})$, so $U \in\CU^{e^{-n},\lp'+}$. Hence,
for any $m\in\N$ and $\lp'<\lp$, we conclude that $\CC^{m,\lp'+}$
is a cover
for $\Phi^+_{\lp}(\Gamma) \cap B(0,1/2)$.

% Suppose that $z \in\Phi^-_{\lp}(\Gamma)\cap B(0,1/2)$.
% Since $\limsup_{\eps\to0} \TLoopcount_z(\eps) \leq\lp$,
% for any $\lp'>\lp$, for all large enough $n$, $\Loopcount_z(e^{-n})
%\leq\lp' n$.
% There is a ball $U\in\CU^{e^{-n}}$ for which
% $B(z,e^{-n})\subset U$, and so $\Loopcount_z(e^{-n}) \geq
%\Loopcount_{z(U)}((1+1/\sqrt{2})e^{-n})$,
% so $U \in\CU^{e^{-n},\lp'-}$. Hence, for any $m\in\N$ and $\lp'>
%\lp$, we have
% that $\CU^{m,\lp'-}$ is a cover for $\Phi^-_{\lp}(\Gamma)$.

We use this cover to bound the $\alpha$-Hausdorff measure of $\Phi
^{+}_{\lp}(\Gamma)$.
If $m\in\N$ and $\lp'>\lp>\lp_{\mathrm{typical}}$,
%
%e4.3 #&#
\begin{eqnarray}
\label{eqpdim_upper_bound} \E\bigl[\CH_\alpha\bigl(\Phi^{+}_{\lp}(
\Gamma)\bigr)\bigr] &\leq &\E \biggl[\sum_{U \in\CC^{m,\lp'+}} \bigl(
\diam(U)\bigr)^\alpha \biggr]
\nonumber\\
&= &\sum_{n \geq m} \sum
_{U \in\CD^{e^{-n}}} \bigl[(2+\sqrt{2})e^{-n}
\bigr]^\alpha\P\bigl[U \in\CU^{e^{-n},\lp
'+}\bigr]
\\
\nonumber
&\leq&\sum_{n \geq m} e^{n(2-\alpha-\gamma_\kappa(\lp')+o(1))}.
\end{eqnarray}
If $\alpha>2-\gamma_\kappa(\lp')$, the right-hand side tends to $0$
as $m\to
\infty$.
Since $m$ was arbitrary, we conclude that
$\E[\CH_\alpha(\Phi^{+}_{\lp}(\Gamma))] =0$. Therefore, almost surely
$\CH_\alpha(\Phi^{+}_{\lp}(\Gamma))=0$. Any such $\alpha$ is an
upper bound
on $\dim_\CH\Phi^{+}_{\lp}(\Gamma)$. The continuity of $\gamma
_\kappa(\lp)$ then
implies that almost surely $\dim_\CH\Phi^+_{\lp}(\Gamma)\leq
2-\gamma_\kappa(\lp)$
when $\lp>\lp_{\mathrm{typical}}$.
% and $\dimH\Phi^-_{\lp}(\Gamma)\leq
% 2-\g(\lp)$ (when $\lp<\lptyp$).
When $\lp=\lp_{\mathrm{typical}}$, the dimension bound (which is
$2$) holds trivially.
Finally, when $\lp>\lp_{\max}$, the bound in \eqref{eqpdim_upper_bound}
shows that $\CH_0(\Phi^+_{\lp}(\Gamma))=0$ almost surely. Therefore,
$\Phi^+_{\lp}(\Gamma)=\varnothing$ almost surely.
\end{pf}

%s4.2 #&#
\subsection{Lower bound}
\label{secplowerbound}

Next, we lower bound $\dim_\CH(\Phi_\lp(\Gamma))$. As we did for
the upper
bound, we assume without loss of generality that $D=\D$. We will
follow the
strategy used in \cite{HMP} for GFF thick points: we introduce a subset
$P_\lp(\Gamma)$ of $\Phi_\lp(\Gamma)$ which has the property that the
number and geometry of the loops which surround points in $P_\lp
(\Gamma)$
are controlled at every length scale. This reduction is useful because the
correlation structure of the loop counts for these special points is easier
to estimate (Proposition~\ref{proppnearind}) than that of arbitrary points
in $\Phi_\lp(\Gamma)$. Then we prove that the Hausdorff dimension of this
special class of points is at least $2-\gamma_\kappa(\lp)$ with
positive probability.
We complete the proof of the almost sure lower bound of
$\dim_\CH\Phi_\lp(\Gamma)$ using a zero--one argument.

%le4.3 #&#
\begin{lemma} \label{lempapprox-annulus}
Let $\Gamma$ be a $\CLE_\kappa$
in the unit disk $\D$, and fix $\lp\geq0$. Then for functions
$\delta(\eps)$ converging to 0 sufficiently slowly as $\eps\to0$ and
for sufficiently large $M>1$, the event that:

%{
%\centering
%%
%\begin{minipage}{.66\textwidth}
%
\begin{longlist}[(ii)]
\item[(i)]
there is a loop which is contained in the annulus $\ol{B(0,\eps
)}\setminus B(0,\varepsilon/M)$ and which surrounds
$B(0,\eps/M)$, and
\item[(ii)] the index $J$ of the outermost such loop in the annulus $\ol
{B(0,\varepsilon)} \setminus B(0,\varepsilon/M)$
satisfies $ \lp\log\eps^{-1} \leq J \leq(\lp+ \delta(\eps))
\log\eps^{-1}$,
\end{longlist}
has probability at least $\eps^{\gamma_\kappa(\lp)+o(1)}$ as $\eps
\to0$.
%\end{minipage}

\includegraphics{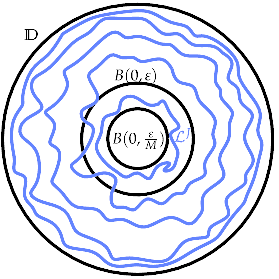}

%\hfill
%\begin{minipage}{.34\textwidth}
%
% \includegraphics[width=.82\textwidth]{figures/annulus-loop}
%\end{minipage}
\end{lemma}
\begin{pf}
We define $\delta(\eps)$ to be $2$ times the function denoted $\delta$
in Lemma~\ref{lempasymp_probability_disk}. Let $E_1$ denote the event
that between $\lp\log\eps^{-1}$ and $(\lp+\frac{1}{2}\delta(\eps))
\log\eps^{-1}$ loops surround $B(z,\eps)$, let $E_2$ denote the event
that at most $\frac{1}{2}\delta(\eps) \log\eps^{-1}$ loops intersect
the circle $\partial B(z,\eps)$, and let $E_3$ denote the event that
there is a loop winding around the closed annulus
$\overline{B(0,\eps)}\setminus B(0,\eps/M)$.

Lemma~\ref{lempasymp_probability_disk} implies
%
%e4.4 #&#
\begin{equation}
\label{eqnpPr-E1} \P[E_1] = \eps^{\gamma_\kappa(\lp)+o(1)}\qquad \mbox{as }\eps\to0.
\end{equation}
Corollary~\ref{corploop_contain_stoch_dom} implies that for sufficiently
small $\eps$, we have
%
%e4.5 #&#
\begin{equation}
\label{eqnpe2_given_e1_bound} \P[E_2 \given E_1] \geq\tfrac{3}{4}.
\end{equation}

Lemma~\ref{lempconditional-overshoot}
applied to the log conformal radius increment sequence implies that for
some large enough $M$,
%
%e4.6 #&#
\begin{equation}
\label{eqnpCR-cap-overshoot} \P \bigl[\confrad \bigl(0;U_0^{J_{0,\eps}^\cap} \bigr)
\geq M^{-1/2} \eps \given E_1 \bigr] \geq\tfrac{7}{8}.
\end{equation}
Lemma~\ref{lempgeodom} and Corollary~\ref
{corploop_contain_stoch_dom} together imply that
for large enough $M$
%
%e4.7 #&#
\begin{equation}
\label{eqnpCR-subset-cap-overshoot} \P \bigl[\confrad \bigl(0;U_0^{J_{0,\eps}^\subset} \bigr)/
\confrad \bigl(0;U_0^{J_{0,\eps}^\cap} \bigr) \geq M^{-1/2}
\given E_1 \bigr] \geq\tfrac{7}{8}.
\end{equation}
Combining \eqref{eqnpPr-E1}, \eqref{eqnpe2_given_e1_bound},
\eqref{eqnpCR-cap-overshoot} and \eqref
{eqnpCR-subset-cap-overshoot}, we arrive at
\[
\P[ E_1\cap E_2\cap E_3] =
\eps^{\gamma_\kappa(\lp)+o(1)}\qquad \mbox {as } \eps\to 0.
\]
The event $E_1\cap E_2 \cap E_3$ implies the event described in the lemma.
\end{pf}

We define the set $P_\lp=P_\lp(\Gamma)$ as follows.
For $z\in\D$ and $k\geq0$, we inductively define (see Figure~\ref
{figpperfect_definition}):
\begin{itemize}
\item Let $\tau_0=0$.
\item Let $V_z^k=U_{z}^{\tau_k}$ be the connected
component of $\D\setminus\Loop_z^{\tau_k}$ containing $z$.
In particular, $V_z^0=D=\D$.
\item Let $\varphi_z^k$ be the conformal map from $V_z^k$ to $\D$
with $\varphi_z^k(z) = 0$ and $(\varphi_z^k)'(z) > 0$.
\item Let $t_k = 2^{-(k+1)}$. \label{notptk}
\item Let $\tau_{k+1}$ be the smallest $j \in\N$ such that $\varphi
_z^k(\Loop_z^j)\subset\overline{B(0,t_k)}$.
\end{itemize}

Let $\wt{\Gamma}_z^k$ be the image under $\varphi_z^k$ of
the loops of $\Gamma$ which are surrounded by $\Loop_z^{\tau_k}$ and in
the same component of $\D\setminus\Loop_z^{\tau_k}$ as $z$.
Then $\wt{\Gamma}_z^k$ is a $\CLE_\kappa$ in $\D$.

Let $M>1$ be a large enough constant for Lemma~\ref
{lempapprox-annulus}, and let
$E_z^k$ to be the event described in
Lemma~\ref{lempapprox-annulus} for the $\CLE$ $\wt{\Gamma}_z^k$ and
$\eps=t_k$. We define
\[
E_z^{k_1,k_2}\colonequals\bigcap_{k_1\leq k < k_2}
E_z^{k}.
\]

%f7 #&#
\begin{figure}

\includegraphics{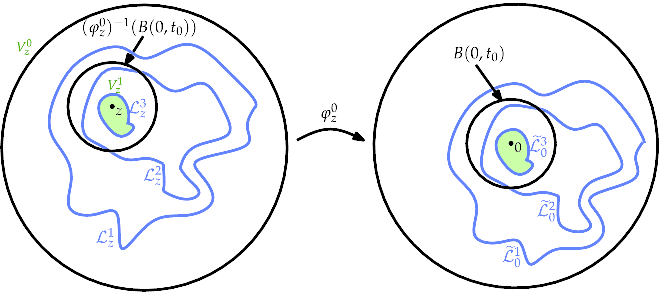}

\caption{First step in the construction of the set of ``perfect
points'' $P_\lp(\Gamma)$ of the CLE $\Gamma$.}
\label{figpperfect_definition}
\end{figure}

Throughout the rest of this section, we let \label{notpsk}
%
%e4.8 #&#
\begin{equation}
\label{eqnps_k_definition} s_k = \prod_{0 \leq i < k}
t_i \qquad\mbox{for } k \geq0.
\end{equation}

%le4.4 #&#
\begin{lemma}
\label{lempek_facts}
There exist sequences $\{r_k\}_{k\in\N}$ and $\{R_k\}_{k\in\N}$
satisfying
%
%e4.9 #&#
\begin{equation}
\label{eqprR} \lim_{k\to\infty}\frac{\log r_k }{\log s_k} = \lim
_{k\to\infty}\frac{\log R_k }{\log s_k} = 1
\end{equation}
such that for all $z\in\overline{B(0,1/2)}$ and $k\geq0$, we have
%
%e4.10 #&#
\begin{equation}
\label{eqpVzk} B(z,r_k) \subset V_z^k
\subset B(z,R_k)
\end{equation}
on the event $E_z^{0,k}$.
\end{lemma}
\begin{pf}
For $0<j\leq k$, the chain rule implies that on the event
$E_z^{0,k}$ we have
%
%e4.11 #&#
\begin{equation}
\label{eqnpconf_rad_increasing} \confrad\bigl(z;V_z^{j}\bigr) = \confrad
\bigl(0;\varphi_z^{j-1}\bigl(V_z^{j}
\bigr)\bigr) \confrad \bigl(z;V_z^{j-1}\bigr) \leq
t_{j-1} \confrad\bigl(z;V_z^{j-1}\bigr) ,
\end{equation}
where the inequality follows from the Schwarz lemma.
Iterating the inequality in~\eqref{eqnpconf_rad_increasing}, we see that
%
%e4.12 #&#
\begin{eqnarray}\label{eqnpconf_rad_chain}
\confrad\bigl(z;V_z^k\bigr) &\leq& t_{k-1}
\confrad\bigl(z;V_z^{k-1}\bigr) \leq\cdots
\leq(t_{k-1} \cdots t_0) \confrad\bigl(z;V_z^0
\bigr)
\nonumber
\\[-8pt]
\\[-8pt]
\nonumber
&=& s_k \confrad(z;\D).
\end{eqnarray}
Since $|((\varphi_z^{k-1})^{-1})'(0)| = \confrad(z;V_z^{k-1})$, it follows
from the
Koebe distortion theorem
that $V_z^k \subseteq
B(z,t_{k-1}/(1-t_{k-1})^2 \confrad(z;V_z^{k-1}))$.
Since $\confrad(z;V_z^{k-1})\leq s_{k-1}\confrad(z;\D)$, $\confrad
(z;\D)=1-|z|^2\leq1$, and
$t_{k-1} \leq1/2$, we see from \eqref{eqnpconf_rad_chain} that
$V_z^k \subseteq B(z,4 s_k)$,
so we set $R_k = 4 s_k$ to get the second inclusion in \eqref{eqpVzk}.

To find $\{r_k\}_{k\in\N}$ satisfying the first inclusion in
\eqref{eqpVzk}, we observe that on $E_z^{0,k}$ we have
\begin{eqnarray*}
\confrad\bigl(z;V_z^k\bigr) &\geq& M^{-1}
t_{k-1} \confrad\bigl(z;V_z^{k-1}\bigr) \geq \cdots
\geq M^{-k} (t_{k-1} \cdots t_0) \confrad
\bigl(z;V_z^0\bigr)
\\
&=& M^{-k} s_k \confrad(z;\D).
\end{eqnarray*}
By the Koebe one-quarter theorem,
we thus see that $\inrad(z;V_z^k) \geq \frac{1}{4}\times\break  M^{-k} s_k
\confrad(z;\D)$. Since $\confrad(z;\D)\geq3/4$ for $z\in
\overline{B(0,1/2)}$, setting $r_k = \frac{3}{16} M^{-k} s_k$ gives
\eqref{eqpVzk}.

A straightforward calculation confirms that these sequences
$\{r_k\}_{k\in\N}$ and $\{R_k\}_{k\in\N}$ satisfy \eqref{eqprR}.
\end{pf}

We define $P_\lp(\Gamma)\subseteq\D$ by
%
%e4.13 #&#
\begin{equation}
\label{eqnpp_definition} P_\lp(\Gamma)\colonequals \bigcap
_{n\geq1} \bigl\{z\in\overline{B(0,1/2)} \dvtx
E_z^{0,n}\mbox{ occurs}\bigr\}.
\end{equation}

Next, we show that elements of $P_\lp(\Gamma)$ are special points of
$\Phi_\lp(\Gamma)$:

%le4.5 #&#
\begin{lemma}
\label{lempptheta_contains}
For $\lp\geq0$, always $P_\lp(\Gamma) \subseteq\Phi_\lp(\Gamma)$.
\end{lemma}
\begin{pf}
It follows from the definition of $E_z^k$ that for $z\in P_\lp$,
the number of loops surrounding $V_z^k$ is $(\lp+o(1))\log s_k^{-1}$ as
$k\to\infty$. Lemma~\ref{lempek_facts} then implies that the number of
loops surrounding $B(0,s_k)$ is also $(\lp+o(1))\log s_k^{-1}$ as
$k\to\infty$.

If $0<\eps<1$, we may choose $k=k(\eps)\geq0$ so that $s_{k+1}\leq
\eps\leq s_k$.
Then
\[
\frac{\Loopcount_z(s_{k})}{\log s_{k}^{-1}} \cdot\frac{\log
s_{k}^{-1}}{\log\eps^{-1}} \leq \frac{\Loopcount_{z}(\eps)}{\log\eps^{-1}}\leq
\frac{\Loopcount_z(s_{k+1})}{\log s_{k+1}^{-1}} \cdot\frac{\log
s_{k+1}^{-1}}{\log\eps^{-1}}.
\]
Observe that $\log s_{k+1} / \log s_k \to1$ as $k\to\infty$.
From this, we see that both the left- hand side and right-hand side
converge to $\lp$ as $\eps\to0$,
so the middle expression also converges to $\lp$ as $\eps\to0$,
which implies $z\in\Phi_\lp(\Gamma)$.
\end{pf}

We use the following lemma, which establishes that the right-hand side of
\eqref{eqnpp_definition} is an intersection of closed sets.

%le4.6 #&#
\begin{lemma} \label{lem:Pnclosed} For each $n\in\N$, the set
$P_{\lp,n}\colonequals\{z\in\overline{B(0,1/2)} \dvtx E_z^{0,n}\mbox{
occurs}\}$ is always closed.
\end{lemma}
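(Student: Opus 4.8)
The plan is to fix an arbitrary realization of $\Gamma$ and verify that $P_{\lp,n}$ is sequentially closed: if $z_m \in P_{\lp,n}$ and $z_m \to z$, then $E_z^{0,n}$ occurs. Since $E_z^{0,n}$ depends only on $z$ and $\Gamma$, we may freely pass to subsequences of $(z_m)$. I would induct on $k \in \{0,\dots,n\}$, carrying the claim that, after passing to a subsequence, for all large $m$ the loops $\Loop_{z_m}^{\tau_j}$, the integers $\tau_j(z_m)$, and the domains $V_{z_m}^j$ (for $j \le k$) equal fixed loops $\Loop^\ast_j$ of $\Gamma$, fixed integers, and fixed domains $V^\ast_j$ with $z \in V^\ast_j$; that the same data describes $z$; that the M\"obius maps $\mu_m^{(j)} \colonequals \varphi_{z_m}^j \circ (\varphi_z^j)^{-1}$ of $\D$ tend to the identity uniformly on $\ol{\D}$, so $\wt{\Gamma}_{z_m}^j = \mu_m^{(j)}(\wt{\Gamma}_z^j)$; and that $E_z^j$ occurs for $j < k$. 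The base case $k=0$ is immediate, since $V_z^0 = \D$ and $\varphi_z^0$ is the automorphism of $\D$ sending $z$ to $0$ with positive derivative, which varies continuously with $z$.

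For the inductive step, the first point is finiteness. Because $z_m \in P_{\lp,n}$, the event $E_{z_m}^{0,n}$ occurs, so Lemma~\ref{lem::ek_facts} gives $B(z_m, r_j) \subseteq V_{z_m}^j \subseteq B(z_m, R_j)$ for all $j \le n$ with fixed $r_j, R_j > 0$; together with the cap $J \le (\lp + \delta(t_k))\log t_k^{-1}$ in the definition of $E_{z_m}^k$, this forces every loop of $\Gamma$ referenced in $E_{z_m}^0, \dots, E_{z_m}^{n-1}$ to have conformal radius about $z_m$ — hence diameter — bounded below by a fixed positive constant, and the number of them to be bounded, uniformly in $m$. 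By local finiteness of the loops of $\Gamma$ (only finitely many loops of diameter exceeding a fixed threshold meet a bounded region), there are thus finitely many candidate loops; and since, as $z_m \to z$, whether a fixed loop surrounds $z_m$ is eventually constant whenever $z$ lies off that loop, while the nesting order of loops about a point does not depend on the point, a further subsequence makes all referenced loops and their indices relative to $z_m$ constant. This yields the stated identities for $\Loop_{z_m}^{\tau_j}$, $\tau_j(z_m)$, $V_{z_m}^j$, and — since $\varphi_z^j$ is continuous at $z \in V^\ast_j$ with $\varphi_z^j(z) = 0$ while $\mu_m^{(j)}(\varphi_z^j(z_m)) = 0$ and $\varphi_z^j(z_m) \to 0$ — forces $\mu_m^{(j)} \to \mathrm{id}$. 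The degenerate case in which $z$ lies on one of the finitely many referenced loops is excluded, since it would drive the conformal radius of $z_m$ in the relevant domain to $0$, contradicting the uniform lower bound above.

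It remains to pass $E_z^k$ to the limit, and this is the crux. From $\wt{\Gamma}_{z_m}^k = \mu_m^{(k)}(\wt{\Gamma}_z^k)$ with $\mu_m^{(k)} \to \mathrm{id}$, the existence part of $E_z^k$ — a loop of $\wt{\Gamma}_z^k$ lying in the \emph{closed} annulus $\ol{B(0,t_k)} \setminus B(0,t_k/M)$ and winding around $0$ — and the upper bound $J(z) \le (\lp + \delta(t_k))\log t_k^{-1}$ on the index of the outermost such loop pass over directly, since these are closed conditions and the winding number of a loop staying at distance $\ge t_k/M$ from $0$ is stable under $\mu_m^{(k)}$ together with $z_m \to z$. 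The main obstacle is the lower bound $J(z) \ge \lp \log t_k^{-1}$: the index of the outermost qualifying loop is integer-valued and can, a priori, drop in the limit — precisely when that loop is tangent to $\partial B(0,t_k)$ or to $\partial B(0,t_k/M)$, so that its $\mu_m^{(k)}$-images lie just outside the closed annulus while it itself lies in it. I would rule this out by showing that the maximal and minimal moduli attained on each relevant loop of the $\CLE$ $\wt{\Gamma}_z^k$ differ from the deterministic values $t_k$ and $t_k/M$, so that $\mu_m^{(k)}(\ell)$ qualifies if and only if $\ell$ does for every relevant loop $\ell$; then $J(z) = \lim_m J(z_m) \in [\lp \log t_k^{-1}, (\lp + \delta(t_k))\log t_k^{-1}]$, so $E_z^k$ occurs. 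Running the induction up to $k = n$ gives $E_z^{0,n}$, i.e.\ $z \in P_{\lp,n}$.
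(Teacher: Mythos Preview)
Your approach is the sequential version of the paper's: the paper argues that the complement of $P_{\lp,n}$ is open by showing that failure of either condition in $E_z^k$ propagates to a neighborhood of $z$, while you argue closedness directly by passing the conditions to a limit point. The two are logically equivalent, and your write-up is considerably more detailed --- in particular, you make explicit the local constancy of the referenced loops and indices (via local finiteness of CLE and the uniform inradius bounds from Lemma~\ref{lem::ek_facts}), and the convergence $\mu_m^{(k)} \to \mathrm{id}$ of the normalizing M\"obius maps, both of which the paper compresses into a single parenthetical about the continuity of $\varphi_z^k$ in $z$.

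There is, however, a genuine gap at exactly the point you flag. The index $J$ is only lower semicontinuous in $z$: the set $\{J \le j_0\}$ is closed (being in the \emph{closed} annulus is a closed condition on a continuously varying loop), but $\{J \ge j_0\}$ is not closed in general. So the lower bound $J(z) \ge \lp \log t_k^{-1}$ need not pass to the limit, and the paper's terse claim that ``failure of (ii) propagates'' has the same issue from the other side. Your proposed fix --- showing that no relevant loop of $\wt{\Gamma}_z^k$ has maximal or minimal modulus exactly $t_k$ or $t_k/M$ --- is the natural idea, but it is not justified as written: $\wt{\Gamma}_z^k$ depends on the limit point $z$ through the normalization of $\varphi_z^k$, and as $z$ ranges over $V_z^k$ the quantity $\max|\varphi_z^k(\Loop)|$ sweeps through an interval of values for each fixed loop $\Loop$, so tangency will occur for \emph{some} $z$ in any realization. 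To close the gap you would need either a different argument ruling out tangency at the specific $z$ arising as a limit of $P_{\lp,n}$, or to weaken the statement to almost-sure closedness (which suffices for Proposition~\ref{prop::lowerbound}) and argue carefully that the exceptional set of limit points is controlled.
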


\begin{pf}
Suppose that $z$ is in the complement of $P_{\lp,n}$, and let $k$ be
the least value of $j$ such that $E_z^j$ fails to occur. Each of the
two conditions in the definition of $E_z^k$ (see
Lemma~\ref{lempapprox-annulus}) has the property that its failure
implies that $E_w^k$ also does not occur for all $w$ in some
neighborhood of~$z$. (We need the continuity of $\varphi_z^k$ in~$z$,
which may be proved by realizing $\varphi_w^k$ as a composition of
$\varphi_z^k$ with a M\"obius map that takes the disk to itself and the
image of $w$ to $0$.) This shows that the complement of $P_{\lp,n}$ is
open, which in turn implies that $P_{\lp,n}$ is closed.
\end{pf}

%pr4.7 #&#
\begin{proposition}
\label{proppnearind}
Consider a $\CLE_\kappa$ in $\D$. There exists a function $f$ (depending
on $\kappa$ and $\lp$) such that (1) $f(s)=s^{\gamma_\kappa(\lp
)+o(1)}$ as
$s\to0$,
and (2) for all
$z,w \in\overline{B(0,1/2)}$
%
%e4.14 #&#
\begin{equation}
\label{eqpnearind} \P\bigl[E_z^{0,n} \cap E_w^{0,n}
\bigr]f\bigl(\max\bigl(s_n,|z-w|\bigr)\bigr) \leq\P\bigl[E_z^{0,n}
\bigr] \P\bigl[E_w^{0,n}\bigr].
\end{equation}
\end{proposition}

\begin{pf}
Suppose $z,w\in\overline{B(0,1/2)}$. Let $r_k$ and $R_k$ be defined
as in Lem\-ma~\ref{lempek_facts}.
If $|z-w| \leq R_n$, then we bound $\P[E_z^{0,n} \cap E_w^{0,n}] \leq
\P[E_z^{0,n}]$
and, using Lemma~\ref{lempapprox-annulus} and the fact that $R_n=4 s_n$,
\[
\P\bigl[E_z^{0,n}\bigr] \geq\prod
_{k\leq n} t_k^{\gamma_\kappa(\lp
)+o(1)}=s_n^{\gamma_\kappa(\lp
)+o(1)}
= \max\bigl(s_n,|z-w|\bigr)^{\gamma_\kappa(\lp)+o(1)} ,
\]
which implies \eqref{eqpnearind}. Next suppose $|z-w| > R_n$. Letting
\[
u = \min\bigl\{k\in\N\dvtx R_k < |z-w|\bigr\} ,
\]
we have
\[
\label{eqpprodthree} \P\bigl[ E_z^{0,n} \cap E_z^{0,n}
\bigr] = \P\bigl[ E_z^{0,u} \cap E_w^{0,u}
\bigr] \P \bigl[E_z^{u,n}\cap E_w^{u,n}
| E_z^{0,u} \cap E_w^{0,u}\bigr].
\]
By Lemma~\ref{lempek_facts}, $w\notin V_z^u$ and $z
\notin V_w^u$, so we see that $V_z^u$
and $V_w^u$ are disjoint. By the renewal property of
CLE, this implies that conditional on $E_z^{0,u} \cap
E_w^{0,u}$, the events $E_z^k$ and
$E_w^k$ for $k\geq u$ are independent. Thus,
\begin{eqnarray*} \P\bigl[
E_z^{0,n} \cap E_z^{0,n}\bigr] &=& \P
\bigl[ E_z^{0,u} \cap E_w^{0,u}\bigr]
\P \bigl[E_z^{u,n}\bigr] \P\bigl[E_w^{u,n}
\bigr]
\\
&\leq&\P\bigl[E_z^{0,u}\bigr] \P\bigl[E_z^{u,n}
\bigr]\P\bigl[ E_w^{u,n} \bigr]\qquad\mbox{so}
\\
\P\bigl[ E_z^{0,n} \cap E_z^{0,n}
\bigr] \P\bigl[E_w^{0,u}\bigr]&\leq&\P\bigl[
E_z^{0,n}\bigr] \P\bigl[E_w^{0,n}
\bigr].
\end{eqnarray*}
Since
\[
\P\bigl[E_w^{0,u}\bigr] \geq s_u^{\gamma_\kappa(\lp)+o(1)}
= \max \bigl(s_n,|z-w|\bigr)^{\gamma_\kappa(\lp
)+o(1)} ,
\]
\eqref{eqpnearind} follows in this case as well.
\end{pf}

We take $t_k$ as in Section~\ref{secplowerbound} and $s_k$ as in
\eqref{eqnps_k_definition}. We will prove Theorem~\ref{thmpmain} using
Proposition~\ref{proppnearind} and the following general fact about
Hausdorff dimension.
The key ideas in Proposition~\ref{propplowerbound} have appeared in
\cite{frostman,DPRZ,HMP}, but Proposition~\ref{propplowerbound} gives
a cleaner statement that can be used with our construction of nested
closed sets.

%pr4.8 #&#
\begin{proposition}
\label{propplowerbound}
Suppose $P_1\supset P_2\supset P_3\supset\cdots$ is a random nested
sequence of closed sets, and $\{s_n\}_{n\in\N}$ is a sequence of
positive real
numbers converging to 0. Suppose further that $0<a<2$, and
$f(s)=s^{a+o(1)}$ as $s\to0$. If for each $z,w\in\D$ and $n\geq1$ we
have $\P[z\in P_n]>0$ and
%
%e4.15 #&#
\begin{equation}
\label{eqpalmostind} \P[z,w \in P_n] f\bigl(\max\bigl(s_n,|z-w|\bigr)
\bigr) \leq\P[z \in P_n] \P[w \in P_n] ,
\end{equation}
then for any $\alpha<2-a$,
\[
\P\bigl[\dim_\CH(P) \geq\alpha\bigr] >0 \qquad\mbox{where } P\colonequals
\bigcap_{n\geq1} P_n.
\]
\end{proposition}

\newcommand{\randmeas}{\mu}
\begin{pf}
Let $\randmeas_n$ denote the random measure with density with
respect to Lebesgue measure on $\C$ given by
\[
\frac{d\randmeas_n(z)}{dz} = \frac{\one_{z\in P_n\cap\D} }{ \P
[z\in P_n]}.
\]
Then
$\E[\randmeas_n(\D)]=\mathrm{area}(\D)$, and by \eqref{eqpalmostind},
\[
\E\bigl[\randmeas_n(\D)^2\bigr] = \int\!\!\!\int
_{\D\times\D} \frac{\P
[z,w\in
P_n]}{\P[z\in
P_n]\P[w\in P_n]} \,dz \,dw \leq C_1 < \infty
\]
for some constant $C_1$ depending on the function $f$ but not $n$.

For $\alpha\geq0$, the $\alpha$-\textit{energy} of a measure $\mu
$ on
$\C$ is defined by
\[
I_\alpha(\mu)\colonequals\int\!\!\!\int_{\C\times\C}
\frac
{1}{|z-w|^\alpha} \,d\mu(z) \,d\mu(w).
\]
If there exists a nonzero measure with finite $\alpha$-energy
supported on
a set $P\subset\C$, usually called a Frostman measure, then the Hausdorff
dimension of $P$ is at least~$\alpha$ (\cite{MR2118797}, Theorem~4.13). The
expected $\alpha$-energy of $\randmeas_n$ is
\[
\E\bigl[I_\alpha(\randmeas_n)\bigr] = \int\!\!\!\int
_{\D\times\D} \frac{\P[z,w\in P_n]}{\P[z\in
P_n]\P[w\in P_n]} \frac{1}{|z-w|^\alpha} \,dz \,dw ,
\]
and when $\alpha<2-a$, the expected $\alpha$-energy is
bounded by a finite constant $C_2$ depending on $f$ and $\alpha$ but not
$n$.

Since the random variable $\randmeas_n(\D)$ has constant mean and uniformly\break 
bounded variance, it is uniformly bounded away from $0$ with uniformly
positive probability as $n \to\infty$. Also, $\P[I_\alpha(\randmeas_n)
\leq d] \to1$ as $ d\to\infty$ uniformly in $n$. Therefore, we can choose
$b$ and $d$ large enough that the probability of the event
\[
G_n\colonequals\bigl\{b^{-1} \leq\randmeas_n(
\D) \leq b \mbox{ and }I_\alpha(\randmeas_n) \leq d\bigr\}
\]
is bounded away from $0$ uniformly in $n$. It follows that
with positive probability infinitely many $G_n$'s occur.
The set of measures $\randmeas$ satisfying $b^{-1} \leq\randmeas(\D
)\leq b$
and is weakly compact by Prohorov's compactness theorem. Therefore,
on the event that $G_n$ occurs for infinitely many $n$,
there is a sequence of integers $k_1,k_2,\ldots$ for which $\randmeas
_{k_\ell}$ converges
to a finite nonzero measure $\randmeas_\star$ on $\D$.

We claim that $\randmeas_\star$ is supported on $P$.
To show this, we use the
portmanteau theorem, which implies that if $\pi_\ell\to\pi$ weakly
and $U$ is open, then $\pi(U)\leq\liminf_\ell\pi_\ell(U)$.
Since $P_n$ is closed for each $n\in\N$, we have
\[
\randmeas_\star(\C\setminus P_n) \leq\liminf
_{\ell\to\infty}\randmeas_{k_\ell}(\C\setminus P_n) =
0 ,
\]
where the last step follows because $\randmeas_{k_\ell}$ is supported
on $P_{k_\ell}\subset P_n$ for $k_\ell\geq n$. Therefore,
\[
\randmeas_\star(\C\setminus P) = \lim_{n\to\infty}
\randmeas_\star(\C\setminus P_n) = 0 ,
\]
so $\randmeas_\star$ is supported on $P$.

To see that $\randmeas_\star$ has
finite $\alpha$-energy, we again use the portmanteau theorem, which implies
that
\[
\int f\, d\mu\leq\liminf_{\ell\to\infty}\int f \,d\mu_\ell
\]
whenever $f$ is a lower semicontinuous function bounded from below and
$\mu_\ell\to\mu$ weakly. Taking $f(z,w)=|z-w|^{-\alpha}$, $\mu
_\ell=
\randmeas_{k_\ell}(dz) \randmeas_{k_\ell}(dw)$, and $\mu=
\randmeas(dz)\randmeas(dw)$ completes the proof.
\end{pf}

\begin{pf*}{Proof of Theorem~\ref{thmpmain}}
Recall that conformal invariance was proved in
Proposition~\ref{proppPhi-conformal}. We now show that
$\dim_\CH\Phi_\lp(\Gamma) = 2-\gamma_\kappa(\lp)$ almost
surely, when $0\leq
\lp\leq
\lp_{\max}$. (The case $\lp=\lp_{\max}$ uses a separate argument.)
We established the upper bound in
Section~\ref{secpupperbound}, so we just need to prove the lower bound.

Suppose $\lp<\lp_{\max}$. For each connected component $U$ in the
complement of the gasket of $\Gamma$, let $z(U)$ be the
lexicographically smallest rational point in $U$, and let $\varphi_U$
be the Riemann map from $(U,z(U))$ to $(\D,0)$ with positive derivative
at $z(U)$. By Proposition~\ref{propplowerbound}, for any $\eps>0$,
there exists $p(\eps) > 0$ such that
\[
\P\bigl[\dim_\CH\bigl(P_\lp\bigl(\varphi_U(
\Gamma|_U)\bigr)\bigr) \geq2-\gamma_\kappa (\lp)-\eps\bigr]
\geq p(\eps).
\]
By Lemma~\ref{lempptheta_contains} $P_\lp(\varphi_U(\Gamma|_U))
\subset\Phi_\lp(\varphi_U(\Gamma|_U))$, and by conformal invariance
we have $\dim_\CH\Phi_\lp(\varphi_U(\Gamma|_U)) = \dim_\CH
\Phi_\lp(\Gamma|_U)$, which lower bounds $\dim_\CH\Phi_\lp
(\Gamma)$.
Since there are infinitely many components $U$ in the complement of the
gasket, and the $\Gamma|_U$'s are independent, almost surely
$\dim_\CH\Phi_\lp(\Gamma)\geq2- \gamma_\kappa(\lp) -\eps$.
Since $\eps> 0$ was
arbitrary, we conclude that almost surely $\dim_\CH\Phi_\lp(\Gamma)
\geq
2-\gamma_\kappa(\lp)$.

It remains to show that $\Phi_\lp(\Gamma)$ is dense in $D$ almost
surely, for $0\leq\lp< \lp_{\mathrm{max}}$. Let $z$ be a rational
point in $D$, and recall that $U_z^k$ is the complementary
connected component of $D\setminus\Loop_z^k$ which contains $z$.
Almost surely $\Phi_\lp(\Gamma|_{U_z^k})$ has positive Hausdorff
dimension, and in particular is nonempty. Since there are countably
many such pairs $(z,k)$, almost surely
$\Phi_\lp(\Gamma|_{U_z^k})\neq\varnothing$ for each such $z$ and $k$,
and almost surely for each rational point $z$,
$\operatorname{diameter}(U_z^k)\to0$.
\end{pf*}

%
%th4.9 #&#
\begin{theorem} \label{thmpuncountable}
For a $\CLE_\kappa$ $\Gamma$ in a proper simply connected domain
$D$, almost surely $\Phi_{\lp_{\max}}(\Gamma)$ is equinumerous with
$\R$.
Furthermore, almost surely $\Phi_{\lp_{\max}}(\Gamma)$ is dense in $D$.
\end{theorem}

\begin{pf}
As usual, we assume without loss of generality that $D=\D$. We will
describe a random injective map from the set $\{0,1\}^\N$ of binary
sequences to $\D$ such that the image of the map is almost surely a
subset of $\Phi_{\lp_{\max}}(\Gamma)$. The idea of the proof is to find
two disjoint annuli in $\D$ such that $\Gamma$ contains a loop winding
around each annulus, $\Gamma$ has many loops surrounding these annuli,
and for which the nesting of these loops is sufficiently well behaved
for the limiting loop density to make sense.
We then find two further such annuli
inside each of those, and so on. Every binary sequence specifies a path in
the resulting tree of domains, and the intersections of the domains along
distinct paths correspond to distinct points in
$\Phi_{\lp_{\max}}(\Gamma)$.

Let $M>0$ be a large constant as described in
Lemma~\ref{lempapprox-annulus}. For a CLE $\Gamma$ in $\D$, let
$E^{\D}_{0,\eps}(\lp)$ denote the event that there is a loop
contained in
$\overline{B(0,\eps)}\setminus B(0,\eps/M)$ surrounding $B(0,\eps
/M)$ and
such that the index $J$ of the outermost such loop is at least $\lp
\log
\eps^{-1}$. If $(D,z)\neq(\D,0)$, $\Gamma$ is a CLE in $D$, and
$\eps>0$, let $E^D_{z,\eps}(\lp)$ be the event $E^{\D}_{0,\eps
}(\lp)$
occurs for the conformal image of $\Gamma$ under a Riemann map from
$(D,z)$ to $(\D,0)$. If $\{\eps_j\}_{j\in\N}$ is a sequence of positive
real numbers, let
$E^{D,n}_{z}(\lp)=E^{D,n}_{z,\{\eps_j\}_{j=1}^\infty}(\lp)$ denote the
event that $E^D_{z,\eps}(\lp)$ ``occurs $n$ times'' for the first $n$
values of $\eps$ in the sequence. More precisely, we define
$E^{D,n}_{z}(\lp)$ inductively by
$E^{D,1}_{z}(\lp)=E^{D}_{z,\eps_1}(\lp)$ and
%
%e4.16 #&#
\begin{equation}
\label{eqpEdef} E^{D,n}_{z}(\lp) = E^D_{z,\eps_1}(
\lp) \cap E^{U_z^J,n-1}_{z,\{\eps_j\}_{j=2}^\infty}(\lp) ,
\end{equation}
for $n>1$. For the remainder of the proof, we fix the sequence $\eps_j
\colonequals t_j=2^{-j-1}$ and define the events $E^{D,n}_{z}(\lp)$
with respect to this sequence.

For a domain $D$ with $z_0\in D$, let $\varphi$ be a conformal map from
$(\D,0)$ to $(D,z_0)$, and let $F^{D,z_0,n}(\lp)$ denote the event that
there is some point $z\in B(0,1/2)$ for which
$E^{D,n}_{\varphi(z)}(\lp)$ occurs. By Lemma~\ref{lem:Pnclosed} and
Propositions \ref{proppnearind} and \ref{propplowerbound}, we see
that there is some $p>0$ [depending on $\kappa\in(8/3,8)$ and
$\lp<\lp_{\max}$], such that $\P[F^{D,z_0,n}(\lp)]\geq p$ for
all $n$.

For each $k\in\N$, we choose $\lp_k\in(\lp_{\mathrm{typical}},\lp
_{\max})$ so that
$\gamma_\kappa(\lp_k)=2-2^{-k-1}$. For each $k\in\N$ and $\ell\in
\N$,
we define $q_{k,\ell}=2^{-2k-\ell}$.

Suppose $z\in\ol{B(0,1/2)}$ and $0<r<1/2$ and $0<u<r/M$. For $n\in\N$
and $\lp<\lp_{\max}$, we say that the annulus $B(z,r)\setminus B(z,u)$
is $(n,\lp)$-\textit{good} if (i) there exists a loop contained in the
annulus and surrounding $z$ (say $\Loop_z^j$ is the outermost such
loop) and (ii) the event $F^{U_z^j,z,n}(\lp)$ occurs.

For $q,r>0$, define $u(q,r)=(q/C)^{1/\alpha} r^{1+2/\alpha}$, where $C$
and $\alpha$ are chosen so that every annulus $B(z,r)\setminus B(z,u)$
contained in $D$ contains a loop surrounding $z$ with probability at
least $1-C (u/r)^{\alpha}$ (see Lemma~\ref{lempannulus-loop}). For
$0<r<1/2$, let $S_r$ be a set of $\frac{1}{100r^2}$ disjoint disks of
radius $r$ in $B(0,1/2)$. By our choice of $u(q,r)$, the event $G$ that
all the disks $B(z,r)$ in $S_{r}$ contain a CLE loop surrounding
$B(z,u)$ has probability at least $1-q$. We choose $r_{k,\ell}>0$
small enough so that for all $n\in\N$, with probability at least
$1-2^{1-2k-\ell}$ there are two disks $B(z,r_{k,\ell})$ in
$S_{r_{k,\ell}}$ such that $B(z,r_{k,\ell})\setminus
B(z,u(q_{k,\ell},r_{k,\ell}))$ is an $(n_k,\lp_k)$-good annulus.
This is
possible because on the event $G$, the disks in $S_{r_{k,\ell}}$ give
us $\frac{1}{100r_{k,\ell}^2}$ independent trials to obtain a good
annulus, and each has success probability at least $p$. Abbreviate
$u_{k,\ell} = u(q_{k,\ell},r_{k,\ell})$. Finally, we define a sequence
$(n_k)_{k\in\N}$ growing sufficiently fast that
%
%e4.17 #&#
\begin{equation}
\label{eqpnfast} \lim_{k\to0}\frac{\sum_{j=1}^k \log u_{j+1,1}^{-1}}{\sum_{j=1}^k
\log s_{n_j}^{-1}} = 0.
\end{equation}

Now suppose that $\Gamma$ is a CLE in the unit disk. Define
\[
A=A(\D,0,r,u,q,n,\lp)
\]
to be the event that there are at least
two disks $B(z,r)$ and $B(w,r)$ in $S_{r}$ such that $B(z,r)\setminus
B(z,u)$ and $B(w,r)\setminus B(w,u)$ are both $(n,\lp)$-good. If
$(D,z)\neq(\D,0)$ and $\Gamma$ is a CLE in $D$, define
$A=A(D,z,r,u,q,n,\lp)$ to be the event that
$A(\D,0,r,u,q,n,\lp)$ occurs for the conformal image of $\Gamma$
under a Riemann map from $(D,z)$ to $(\D,0)$. Abbreviate
$A(D,z,r_{k,\ell},u_{k,\ell},q_{k,\ell},n_k,\lp_k)$ as $A_{k,\ell}(D,z)$.

We define a random map $b\mapsto D_b$ from the set of terminating
binary sequences to the set of subdomains of $\D$ as follows. If the
event $A_{1,1}(\D,0)$ occurs, we set $\ell(\D)=1$ and define
$D_0=\varphi_z^{-1}(U_{\acute{z}}^{I(\acute{z})})$ and
$D_1=\varphi_w^{-1}(U_{\acute{w}}^{I(\acute{w})})$, where $z$ and $w$
are the centers of two $(n_1,\lp_1)$-good annuli, $\varphi_z$
(resp., $\varphi_w$) is a Riemann map from $(\D,z)$ [resp., $(D,w)$] to
$(\D,0)$, $\acute{z}\in U_z^{J_{z,r_{1,1}}^\subset}$ and $\acute{w}
\in
U_w^{J_{w,r_{1,1}}^\subset}$ are points for which
$E^{U_z^{J_{z,r_{1,1}}^\subset},n_1}_{\acute{z}}(\lp_1)$ and
$E^{U_w^{J_{w,r_{1,1}}^\subset},n_1}_{\acute{w}}(\lp_1)$ occur, and
$I(\acute{z})$ [resp., $I(\acute{w})$] is the index of the $n$th loop
encountered in the definition of $E^{D,n}_{\acute{z}}(\lp_1)$
[resp.,
$E^{D,n}_{\acute{w}}(\lp_1)$] [in other words, the first such loop is
denoted $J$ in \eqref{eqpEdef}, the second such loop is the first one
contained in the preimage of $B(0,\eps_2)$ under a Riemann map from
$(U_z^J,z)$ to $(D,0)$, and so on]. If $A$ does not occur, then we
choose a disk $B(z,r_{1,1})$ in $S_{r_{1,1}}$ and consider whether the
event\vspace*{-2pt} $A_{1,2}(U{}^{J_{z,r_{1,1}}^\subset}_{z})$ occurs. If it does, then
we set $\ell(\D) = 2$ and define $D_0$ and $D_1$ to be the conformal
preimages of $U_z^{J_{z,r_{1,2}}^\subset}$ and
$U_w^{J_{w,r_{1,2}}^\subset}$, respectively, where again $z$ and $w$
are centers of two $(n_1,\lp_1)$-good annuli. Continuing inductively in
this way, we define $\ell(\D) \in\N$ and $D_0$ and $D_1$ [note that
$\ell(\D)<\infty$ almost surely by the Borel--Cantelli lemma since
$\sum_{\ell}2^{1-2k-\ell}<\infty$]. Repeating this procedure in $D_0$
and $D_1$ beginning with $k=2$ and $\ell= 1$, we obtain
$D_{i,j}\subset D_i$ for $i,j\in\{0,1\}\times\{0,1\}$. Again
continuing inductively, we obtain a map $b\mapsto D_b$ with the
property that $D_{b} \subset D_{b'}$ whenever $b'$ is a prefix of $b$.

If $b\in\{0,1\}^\N$, we define $z_b =
\bigcap_{b'\mathrm{is\ a\ prefix\ of\ }b} D_{b'}$. Since $\sum_{k}
2^k2^{-2k-\ell} < \infty$, with probability 1 at most finitely many of
the domains $D_b$ have $\ell(D_b) >0$. It follows from this observation
and \eqref{eqpnfast} that
\[
\liminf_{t\to0} \TLoopcount_{z_b}(t) \geq
\lp_{\max}.
\]
But by Proposition~\ref{proppupper_bound}, almost
surely every point $z$ in $\D$ satisfies
\[
\limsup_{t\to0} \TLoopcount_z(t) \leq
\lp_{\max}.
\]
Therefore, $z_b \in
\Phi_{\lp_{\max}}(\Gamma)$.

Since the set of binary sequences is equinumerous with $\R$, this
concludes the proof that $\Phi_{\lp_{\max}}(\Gamma)$ is equinumerous
with $\R$. The proof that $\Phi_{\lp_{\max}}(\Gamma)$ is dense now
follows using the argument for density in Theorem~\ref{thmpmain}.
\end{pf}

%s5 #&#
\section{Weighted loops and Gaussian free field extremes}
\label{secpweighted_loops}

The main result of this section is
Theorem~\ref{thmparbitrary_distribution}, which generalizes
Theorem~\ref{thmpmain} and highlights the connection between extreme
loop counts
and the extremes of the Gaussian free field \cite{HMP}. Let $\Gamma$
be a
$\CLE_\kappa$, and fix a probability measure $\mu$ on $\R$.
Conditional on $\Gamma$, let $(\xi_\Loop)_{\Loop\in\Gamma}$ be an
i.i.d.
collection of $\mu$-distributed random variables indexed by $\Gamma$. For
$z \in D$ and $\eps> 0$, we let $\Gamma_z(\eps)$ \label
{notpGamma_z} be
the set of loops in $\Gamma$ which surround $B(z,\eps)$ and define
\[
\SLoopcount_z(\eps)=\sum_{\Loop\in
\Gamma_z(\eps)}
\xi_\Loop\quad\mbox{and}\quad \TLoopsum_z(\eps) =
\frac
{\SLoopcount_z(\eps)}{\log(1/\eps)}.
\]
For a $\CLE_\kappa$ $\Gamma$ on a domain $D$ and $\gp\in\R$, we
define $\Phi^\mu_\gp(\Gamma) \subset
D$ by
\[
\Phi^\mu_\gp(\Gamma) \colonequals \Bigl\{z\in D \dvtx
\lim_{\eps\to0} \TLoopsum_z(\eps)=\gp \Bigr\}.
\]

To study the Hausdorff dimension of $\Phi^\mu_\gp(\Gamma)$, where
$\Gamma$ is a $\CLE_\kappa$ on
$D$, we introduce for each $(\gp,\lp) \in\R\times[0,\infty)$ the set
%
%e5.1 #&#
\begin{equation}
\label{eqnpPhi-k-mu-a-nu} \Phi^\mu_{\gp,\lp}(\Gamma) \colonequals \Bigl
\{z\in D \dvtx \lim_{\eps\to0} \TLoopsum_z(\eps)=\gp
\mbox{ and } \lim_{\eps\to0}\TLoopcount_z(\eps)=\lp \Bigr
\}.
\end{equation}
Let $\Lambda_\mu^\star$ be the Fenchel--Legendre transform of $\mu$ and
let $\Lambda_\kappa^\star$ be the Fenchel--Legendre transform of the
$\log$
conformal radius distribution \eqref{eqnpLambdaR}. We define
%
%e5.2 #&#
\begin{equation}
\label{eqnpg_alpha_nu_def} \gamma_\kappa(\gp,\lp) = \cases{\displaystyle \lp
\Lambda_\mu^\star \biggl(\frac{\gp}{\lp} \biggr) + \lp
\Lambda _\kappa^\star \biggl(\frac{1}{\lp} \biggr), &\quad $
\lp>0,$\vspace *{2pt}
\cr
\displaystyle\lim_{\lp'\searrow0} \gamma_\kappa\bigl(
\gp,\lp'\bigr), &\quad $\lp=0\mbox{ and }\gp\neq 0,$ \vspace*{2pt}
\cr
\displaystyle\lim
_{\lp'\searrow0} \gamma_\kappa\bigl(\lp'\bigr)=1 -
\frac{2}{\kappa} - \frac
{3\kappa}{32}, & \quad $\lp=0\mbox{ and }\gp= 0 ,$}
\end{equation}
where the limits exist by the convexity of $\Lambda_\kappa^\star$ and
$\Lambda_\mu^\star$
[Proposition~\ref{propptransform_properties}(i)]. Note
that $\gamma_\kappa(\gp,\lp)$ may be infinite for some $(\gp,\lp
)$ pairs.
Note also
that the second and third limit expressions for $\gp=0$, $\lp=0$ agree
except when $\Lambda_\mu^\star(0)=\infty$, because $\lim_{\lp'
\to0} \lp'
\Lambda_\mu^\star(0/\lp') = 0$ whenever $\Lambda_\mu^\star
(0)<\infty$.

%th5.1 #&#
\begin{theorem}
\label{thmparbitrary_distribution_joint}
Suppose $\lp\geq0$, $\gp\in\R$,
$\Phi^\mu_{\gp,\lp}(\CLE_\kappa)$ is given by \eqref{eqnpPhi-k-mu-a-nu},
and $\gamma_\kappa(\gp,\lp)$ is given by
\eqref{eqnpg_alpha_nu_def}.
If $\gamma_\kappa(\gp,\lp) \leq2$, then almost surely,
%
%e5.3 #&#
\begin{equation}
\label{eqnpweighted_dim} \dim_\CH\Phi^\mu_{\gp,\lp}(
\CLE_\kappa) = 2-\gamma_\kappa(\gp ,\lp).
\end{equation}
If $\gamma_\kappa(\gp,\lp) > 2$, then almost surely $\Phi^\mu
_{\gp,\lp
}(\CLE_\kappa) = \varnothing$.
\end{theorem}
\begin{pf}
Suppose that $\Gamma\sim\CLE_\kappa$ in a proper simply connected
domain $D \subset\C$.
If $\gp= \lp= 0$, then $\Phi^\mu_{\gp,\lp}(\Gamma)$ contains the gasket
of $\Gamma$, which implies $\dim_\CH\Phi^\mu_{\gp,\lp}(\Gamma)
\geq
2-\gamma_\kappa(0,0)$ \cite{NW,MSW}. Furthermore, $\Phi^\mu_{\gp
,\lp}(\Gamma)\subset
\Phi_{0}(\Gamma)$, which implies by Theorem~\ref{thmpmain} that
$\dim_\CH
\Phi^\mu_{\gp,\lp}(\Gamma) \leq2-\gamma_\kappa(0,0)$. Therefore,
\eqref{eqnpweighted_dim} holds in the case $\gp= \lp= 0$.

Suppose that $(\gp,\lp)\neq(0,0)$, and assume $\gamma_\kappa(\gp
,\lp)\leq
2$. For the upper bound in~\eqref{eqnpweighted_dim}, we follow the proof
of Proposition~\ref{proppupper_bound}. As before, we restrict our
attention without loss of generality to the case that $D = \D$ and the set
$\Phi^\mu_{\gp,\lp}(\Gamma) \cap B(0,1/2)$.

For the remainder of the proof, we interpret the expression
$0\Lambda^\star(\gp/0)$ to mean $\lim_{\lp\to
0}\lp\Lambda^\star(\gp/\lp)$ for $\Lambda^\star\in
\{\Lambda_\mu^\star,\Lambda_\kappa^\star\}$ and $\gp\in\R$. Fix
$\eps>0$. We claim that for $\delta>0$ sufficiently small,
%
%e5.4 #&#
%e5.5 #&#
\begin{eqnarray}
\label{eqpf_cont_kappa} \inf_{\lp' \in(\lp-\delta,\lp+\delta) \cap[0,\infty)} \lp'
\Lambda_\kappa^\star \biggl(\frac{1}{\lp'} \biggr) &\geq&\lp
\Lambda_{\kappa}^\star \biggl(\frac{1}{\lp} \biggr)-
\frac{\eps}{8}\quad \mbox{and}
\\
\label{eqpf_cont_mu} \mathop{\inf_{\lp' \in(\lp-\delta,\lp+\delta)\cap[0,\infty),
}}_{ \gp'\in(\gp-\delta,\gp+\delta) }
\lp'\Lambda_\mu^\star \biggl(
\frac{\gp'}{\lp'} \biggr) &\geq& 3 \wedge \biggl(\lp \Lambda_\mu^\star
\biggl(\frac{\gp}{\lp} \biggr) - \frac{\eps
}{8} \biggr).
\end{eqnarray}
[We include the minimum with $3$ on the right-hand side of \eqref
{eqpf_cont_mu} to handle the case that $\lp\Lambda_\mu^\star(\gp
/\lp) = \infty$. The particular choice of $3$ was arbitrary; any
value strictly larger than $2$ would suffice.]

The continuity of $\lp
\Lambda_\kappa^\star(1/\lp)$ on $[0,\infty)$
(Proposition~\ref{propplimit_at_zero}) implies \eqref{eqpf_cont_kappa}.

For \eqref{eqpf_cont_mu}, we
consider three cases:
\begin{longlist}[(iii)]
\item[(i)] If $\lp>0$, then \eqref{eqpf_cont_mu} follows from the lower
semi-continuity of $\Lambda^\star_\mu$ (see the definitions in the
beginning of \cite{DZ}, Section~1.2, and \cite{DZ}, Lemma~2.2.5).
\item[(ii)] If $\lp= 0$ (so that $\gp\neq0$) and $\lim_{x\to0}x\Lambda
_\mu^\star(1/x)<\infty$, we write
%
%e5.6 #&#
\begin{equation}
\label{eqpproduct} \lp' \Lambda_\mu^\star
\biggl(\frac{\gp'}{\lp'} \biggr) = \gp'\cdot \biggl(
\frac{\lp'}{\gp'} \Lambda_\mu^\star \biggl(
\frac{\gp'}{\lp'} \biggr) \biggr).
\end{equation}
Assume that $\gp' > 0$; the case that $\gp' < 0$ is symmetric. If
$\delta\in(0,\gp)$, then $\gp' \in(\gp- \delta,\gp+\delta)$
implies that
$\gp'$ is bounded away from 0. Therefore, \eqref{eqpproduct} and the
lower semi-continuity of $\Lambda^\star_\mu$ imply that for all
$\eta
>0$, there exists $\delta>0$ such that
\[
\frac{\lp'}{\gp'}\Lambda_\mu^* \biggl(\frac{\gp'}{\lp'} \biggr)
\geq \lim_{x\to0}x\Lambda_\mu^*(1/x) - \eta,
\]
whenever $0<\lp'<\delta$ and $\gp'\in
(\gp-\delta,\gp+\delta)$. Since $\gp'>\gp- \delta$, we can choose
$\eta>0$ and then $\delta>0$ sufficiently small that
\eqref{eqpf_cont_mu} holds.
\item[(iii)] If $\lp= 0$ (so that $\gp\neq0$) and $\lim_{x\to0}x\Lambda
_\mu^\star(1/x)=\infty$, then the lower semicontinuity of $\Lambda
_\mu^\star$ implies that there exists $\delta> 0$ such that \eqref
{eqpf_cont_mu} holds with $3$ on the right-hand side.
\end{longlist}

We choose $\delta> 0$ so that \eqref{eqpf_cont_kappa} and \eqref
{eqpf_cont_mu} hold, and we replace the definition
\eqref{eqnpUdef} of $\CU^{r,\lp+}$ with
\[
\CU^{r,\lp,\gp}\colonequals \bigl\{U \in\CD^r \dvtx \bigl|
\TLoopcount_{z(U)}(r) -\lp \bigr| \leq\delta\mbox{ and } \bigl|
\TLoopsum_{z(U)}(r)- \gp \bigr| \leq\delta \bigr\} ,
\]
where $\CD^{r}$ is defined as in Section~\ref{secpupperbound} in the
proof of Proposition~\ref{proppupper_bound}. As
in \eqref{eqnpcover_def}, $\CC^{m,\lp,\gp} = \bigcup_{n \geq m}
\CU^{\exp(-n),\lp,\gp}$
is a cover of $\Phi^\mu_{\gp,\lp}(\Gamma) \cap B(0,1/2)$ for all
$m\in\N$. Suppose that $\gamma_\kappa(\gp,\lp) \leq2$. Using
Lemma~\ref{lempasymp_probability_disk} and
\hyperref[thmpcramer]{Cram\'er's theorem}, we see that for
sufficiently large $n$,
%
%e5.7 #&#
\begin{eqnarray}\label{eqnpweight_upper_bound}
\nonumber
\P\bigl[U \in\CU^{\exp(-n),\lp,\gp}\bigr] &\leq&\P \bigl[\bigl |
\TLoopsum_{z(U)}\bigl(e^{-n}\bigr)-\gp \bigr| \leq\delta |\bigl |
\TLoopcount_{z(U)}\bigl(e^{-n}\bigr)-\lp \bigr| \leq\delta \bigr]\\
&&{}\times  \P \bigl[\bigl |\TLoopcount_{z(U)}\bigl(e^{-n}\bigr) -
\lp\bigr | \leq\delta \bigr]
\\
&\leq& e^{-(\gamma_\kappa(\gp,\lp)-\eps/2)n}.\nonumber
\end{eqnarray}
If $\gamma_\kappa(\gp,\lp)>2$, then the same analysis shows that
$\P[U \in
\CU^{\exp(-n),\lp,\gp}] \leq e^{-cn}$ for some $c>2$. The rest of the
argument now follows the proof of Proposition~\ref{proppupper_bound}.

For the lower bound, we may assume $\gamma_\kappa(\gp,\lp)\leq2$, which
implies that
$\lp\Lambda_\mu^\star(\gp/\lp)$ is finite. We consider the events denoted
by $E_z^k$ in the discussion following Lemma~\ref{lempapprox-annulus},
which we now denote by $E_z^k(1)$. We also define events on which we can
control the sums associated with the loops in each annulus. More
precisely, suppose that $(\delta_k)_{k \in\N}$ is a sequence of positive
real numbers with $\delta_k \to0$ as $k \to\infty$. We define
\[
E_z^k(2) = \bigl\{\CS_0
\bigl(t_k; \wt{\Gamma}_z^k\bigr) \in \bigl( (
\gp- \delta_k )\log t_k^{-1}, (\gp+
\delta_k )\log t_k^{-1} \bigr) \bigr\}.
\]
[Recall the definition of $\wt{\Gamma}_z^k$ from
Section~\ref{secplowerbound} and that $\SLoopcount_0(t_k;\wt{\Gamma}_z^k)$
represents the weighted loop count with respect to $\wt{\Gamma}_z^k$, where
we define $\xi_\Loop$ for $\Loop\in\wt{\Gamma}_z^k$ to be equal
to the
weight of the conformal preimage of $\Loop$ in $\Gamma$.] We define the
events $\acute{E}_z^k = E_z^k(1)\cap E_z^k(2)$ and $\acute
{E}_z^{k_1,k_2} =
\bigcap_{k=k_1}^{k_2} \acute{E}_z^k$ as before. Similar to
\eqref{eqnpweight_upper_bound}, we have by
\hyperref[thmpcramer]{Cram\'er's theorem}
\[
\P \bigl[E_z^k(2) | E_z^k(1)
\bigr] = t_k^{\lp
\Lambda_\mu^\star(\gp/\lp)+o(1)} ,
\]
provided $\delta_k \to0$ slowly enough. We multiply both sides by
$\P[E_z^k(1)] = \break  t_k^{\lp\Lambda_\kappa^\star(1/\lp)+o(1)}$ and get
\[
\P\bigl[\acute{E}_z^k\bigr] = t_k^{\gamma_\kappa(\gp,\lp)+o(1)}\qquad
\mbox{as } k \to\infty.
\]
Thus, Proposition~\ref{proppnearind} and its proof carry over with
$\gamma_\kappa(\lp)$ replaced by $\gamma_\kappa(\gp,\lp)$.\vspace*{1pt}

It remains to verify that $\acute{P}(\gp,\lp;\Gamma) \subset
\Phi^\mu_{\gp,\lp}(\Gamma)$, where $\acute{P}(\gp,\lp;\Gamma)$
is defined
to be the set of points $z$ for which $\acute{E}_z^{1,n}$ occurs for all
$n$. We see that $\lim_{\eps\to0} \TLoopcount_z(\eps) = \lp$ for the
reasons explained in the proof of Lemma~\ref{lempptheta_contains}. Moreover, $\lim_{\eps\to0} \TLoopsum
_z(\eps)
= \gp$ for analogous reasons. By Proposition~\ref{propplowerbound}, this
completes the proof.
\end{pf}

In Theorem~\ref{thmparbitrary_distribution}, we show that $\dim_\CH
\Phi
^\mu_\gp(\CLE_\kappa)$ is almost surely equal to the
maximum of the expression given in Theorem~\ref
{thmparbitrary_distribution_joint}
as $\lp$ is allowed to vary. In Theorem~\ref{thmpunique-minimizer}
we show that,
with the exception of some degenerate cases,
there is a unique value of $\lp$ at which this maximum is
achieved.

%th5.2 #&#
\begin{theorem}
\label{thmpunique-minimizer}
Let $\gp\in\R$ and let $\mu$ be a probability measure on $\R$.
\begin{longlist}[(iii)]
\item[(i)]%\label{itemppsi_zero}
If $\gp=0$, then $\lp\mapsto\gamma
_\kappa(\gp
,\lp)$ has
a unique nonnegative minimizer $\lp_0$.
\item[(ii)]%\label{itempunique_minimizer}
If $\gp>0$ and $\mu((0,\infty
))>0$ or
if $\gp<0$ and $\mu((-\infty,0))>0$, then $\lp\mapsto\gamma
_\kappa(\gp,\lp)$
has a unique minimizer $\lp_0$. Furthermore, $\lp_0>0$.
\item[(iii)]%\label{itempall-nu-bad}
If $\gp>0$ and $\mu((0,\infty))=0$ or $\gp<0$ and $\mu((-\infty
,0))=0$, then for all $\lp\in[0,\infty)$ we have $\gamma_\kappa
(\gp,\lp
)=\infty$.
In this case we set $\lp_0=0$.
\end{longlist}
\end{theorem}

\begin{pf}
For part (i), note that when $\gp= 0$, the expression
we seek to minimize is $\lp\Lambda_\mu^\star(0) + \lp
\Lambda_\kappa^\star(1/\lp)$. If $\Lambda_\mu^\star(0)<+\infty
$, then
this expression has a unique positive minimizer because its derivative
with respect to $\lp$ differs from that of $\lp
\Lambda_\kappa^\star(1/\lp)$ by the constant $\Lambda_\mu^\star
(0)$ and,
therefore, varies strictly monotonically from $-\infty$ to $+\infty$. If
$\Lambda_\mu^\star(0)=+\infty$, then $\lp= 0$ is the unique minimizer.

For part (iii), observe by
\hyperref[thmpcramer]{Cram\'er's theorem} that
$\Lambda_\mu^\star(x)=\infty$ when $x$ and $\gp$ have the same sign,
so $\gamma_\kappa(\gp,\lp)=\infty$.

For part (ii), we may assume without loss of
generality that $\gp> 0$ and $\mu((0,\infty))>0$. Define $a =
\essinf
X$ and $b = \esssup X$ for a $\mu$-distributed random variable $X$, so
that $-\infty\leq a \leq b \leq+\infty$. Since $\mu((0,\infty
))>0$, we
have $b>0$ by
Proposition~\ref{propptransform_properties}(v).

We make
some observations about the functions $f_\mu\dvtx (0,\infty) \to[0,\infty]$
and $f_\kappa\dvtx (0,\infty) \to[0,\infty]$ defined by
\[
f_\mu(\lp) \colonequals\lp\Lambda_\mu^\star
\biggl(\frac{\gp
}{\lp} \biggr) \quad\mbox{and}\quad f_\kappa(\lp)\colonequals\lp
\Lambda_\kappa^\star \biggl(\frac{1}{\lp} \biggr).
\]
First, they inherit convexity
from $\Lambda_\mu^\star$ and $\Lambda_\kappa^\star$ by Lemma~\ref
{lempconvex}. Note that the sum $f(\lp)\colonequals
f_\mu(\lp) + f_\kappa(\lp)$ is also convex.

By Proposition~\ref{propptransform_properties}(viii),
$\Lambda_\mu^\star$ is continuously differentiable on $(a,b)$.
The chain rule gives
\[
f_\mu'(\lp) = -\frac{\gp}{\lp}\bigl(
\Lambda_\mu^\star\bigr)' \biggl(
\frac
{\gp}{\lp} \biggr) + \Lambda_\mu^\star \biggl(
\frac{\gp}{\lp} \biggr).
\]
If $a>-\infty$, then
Proposition~\ref{propptransform_properties}(ix)
implies $(\Lambda_{\mu}^\star)'(x)\to-\infty$ as $x \searrow a$.
Similarly, if $b<\infty$, Proposition~\ref
{propptransform_properties}(x)
implies $(\Lambda_{\mu}^\star)'(x)\to+\infty$ as $x \nearrow b$.
In other words,
%
%e5.8 #&#
%e5.9 #&#
\begin{eqnarray}
\lim_{\lp\nearrow\gp/a} f'_\mu(\lp) &=& +\infty\qquad
\mbox{if } a > 0\quad \mbox{ and} \label{eqnpinfinitederivative_a}
\\
\lim_{\lp\searrow\gp/b} f'_\mu(\lp) &=& -\infty\qquad
\mbox{if } b<\infty. \label{eqnpinfinitederivative_b}
\end{eqnarray}

Recall from Proposition~\ref{proppparameterization} that (note
$f_\kappa= \gamma_\kappa$)
\[
\bigl\{(\lp,f_\kappa(\lp)\dvtx 0<\lp<\infty\bigr\} =
\biggl\{ \biggl(
\frac{1}{\Lambda_\kappa'(\mgfparam)}, \mgfparam- \frac{\Lambda_\kappa(\mgfparam)}{\Lambda_\kappa'(\mgfparam)} \biggr) \dvtx -\infty<
\mgfparam< 1 - \frac{2}{\kappa} - \frac{3\kappa}{32} \biggr\}.
\]
Suppose $-\infty< \lambda_0 < 1-2/\kappa- 3\kappa/32$. If $\lp=
1/\Lambda_\kappa'(\lambda_0)$, then
\[
f_\kappa'(\lp) = \biggl(\frac{d}{d\lambda}\bigg|_{\lambda= \lambda
_0} \bigl[\lambda-
\Lambda_\kappa(\lambda)/\Lambda_\kappa'(\lambda) \bigr]\biggr)\Big/\biggl(\frac
{d}{d\lambda}\bigg|_{\lambda= \lambda_0} \bigl[1/\Lambda_\kappa
'(\lambda) \bigr]\biggr) = -
\Lambda_\kappa(\lambda_0).
\]
When we take $\lambda_0 \to-\infty$ (which corresponds to taking
$\lp
\to+\infty$) and $\lambda_0 \to1-2/\kappa- 3\kappa/32$ (which
corresponds to taking $\lp\to0$), respectively, in the explicit
formula \eqref{eqnpLambdaR} for $\Lambda_\kappa$, we obtain
%
%e5.10 #&#
%e5.11 #&#
\begin{eqnarray}
\label{eqnpinfinite_slope_a} \lim_{\lp\searrow0}f'_\kappa(
\lp) &=& -\infty\quad \mbox{and}
\\
\lim_{\lp\nearrow+\infty}f_\kappa'(\lp) &=&+\infty.
\label
{eqnpinfinite_slope_b}
\end{eqnarray}

We complete the proof of (ii) by treating five
cases separately. For each of the cases (i)--(ii) and (iv)--(v), we argue
that $f'(\lp)$ ranges from $-\infty$ to $+\infty$ for
$\lp\in(\gp/b,\gp/ \max(0,a))$ [if $a < 0$ so that $\max(0,a) = 0$
then we interpret $\gp/0 = +\infty$]. Upon showing this,
continuous differentiability of $f$
[Proposition~\ref{propptransform_properties}(viii)] guarantees
by the intermediate value theorem that the equation $f'(\lp)=0$ has a
solution. The convexity of $f_\mu$ and strict convexity of $f_\kappa$
(Proposition~\ref{proppstrictly_convex}) imply that the solution is
unique. Case (iii) uses a separate (easy) argument.
\begin{longlist}[(iii)]
\item[(i)]$a \leq0 < b<\infty$. Note that $f'_\mu(x) \to-\infty$ as
$x\searrow\gp/b$ and $f'_\kappa(x)\to+\infty$ as $x\to+\infty$. Since
$f'_\kappa(x) \not\to\infty$ as $x\searrow\gp/b$ and $f'_\mu
(x)\not\to
-\infty$ as $x\to+\infty$, we conclude that $f'((\gp/b,+\infty))=
(-\infty,+\infty)$.
\item[(ii)]$a\leq0 < b = \infty$. We have $f'((0,+\infty))=(-\infty
,+\infty)$
since $f'_\kappa(x)$ goes to $-\infty$ as $x\searrow0$ and to
$+\infty$ as $x\to+\infty$.
\item[(iii)]$0<a=b<\infty$.
Since $a=b$, $\Lambda_\mu^\star(x) = +\infty$ for all $x\neq b$, so
$\lp= \gp/b$ is the unique minimizer of
$\lp\mapsto\gamma_\kappa(\gp,\lp)$.
\item[(iv)]$0< a < b < \infty$. We have $f'((\gp/b,\gp/a))=(-\infty
,+\infty)$
since $f'_\mu(x)$ goes to $-\infty$ as $x\searrow\gp/b$ and to
$+\infty$ as $x\nearrow\gp/a$.
\item[(v)]$0 < a < b = \infty$. We have $f'((0,\gp/a))=(-\infty,+\infty
)$ since
$f_\kappa'(x)$ goes to $-\infty$ as $x\searrow0$ and $f'_\mu(x)$ goes
to $+\infty$ as $x\nearrow\gp/a$. \quad\qed%\qedhere
\end{longlist}
\noqed\end{pf}

%th5.3 #&#
\begin{theorem}
\label{thmparbitrary_distribution}
Let $\gp\in\R$ and let $\mu$ be a probability measure on $\R$.
Let $\lp_0=\lp_0(\gp)$ be the minimizer of $\lp\mapsto\gamma
_\kappa(\gp,\lp)$
from Theorem~\ref{thmpunique-minimizer}.
If $\gamma_\kappa(\gp,\break \lp_0(\gp)) \leq2$, then almost surely
%
%e5.12 #&#
\begin{equation}
\dim_\CH\Phi^\mu_\gp(\CLE_\kappa) =
2-\gamma_\kappa\bigl(\gp,\lp _0(\gp)\bigr).
\end{equation}
If $\gamma_\kappa(\gp,\lp_0(\gp))>2$, then
$\Phi^\mu_\gp(\CLE_\kappa)=\varnothing$ almost surely.
\end{theorem}

\begin{pf}
The lower bound is immediate from
Theorem~\ref{thmparbitrary_distribution_joint}, since
\[
\Phi_\gp^\mu(\Gamma)\supset\Phi_{\gp,\lp_0(\gp)}^\mu(
\Gamma) ,
\]
where $\Gamma$ is a $\CLE_\kappa$. For the upper
bound, we follow the approach in the proof of
Proposition~\ref{proppupper_bound}. It suffices to consider the case
where the domain is the unit disk~$\D$, and without loss of generality we
may consider the set $\Phi^\mu_\gp(\Gamma) \cap
B(0,1/2)$. Observe that if $\gp=0$, then
%
%e5.13 #&#
\begin{equation}
\label{eqpalpha_equals_0} \gamma_\kappa(0,\lp) = \lp\Lambda_\mu^\star(0)
+ \lp \Lambda_\kappa^\star(1/\lp).
\end{equation}
If $\Lambda_\mu^\star(0)=\infty$, then the first term in
\eqref{eqpalpha_equals_0} is infinite unless $\lp= 0$. It follows that
$\lp_0(0)=0$ in this case. If $\Lambda_\mu^\star(0)<\infty$, then the
derivative of the first term with respect to $\lp$ is a nonnegative
constant $\Lambda_\mu^\star(0)$, while the derivative of the second term
is a strictly increasing function going from $-\infty$ to $\infty$ as
$\lp$ goes from 0 to $\infty$. It follows that
$\Lambda_\mu^\star(0)<\infty$ implies $\lp_0(0)>0$. We first
handle the
case $\Lambda_\mu^\star(0)<\infty$.

Let $c_\mu(\gp)=\gamma_\kappa(\gp,\lp_0(\gp))$. Since
$\lp\Lambda_\kappa^\star(1/\lp)$ and $\lp\Lambda_\mu^\star(\gp
/\lp)$
are convex and lower semicontinuous, we may define $\lp_1$ and
$\lp_2$ so that $\lp\Lambda_\kappa^\star(1/\lp) \leq
c_\mu(\gp)$ if and only if $0\leq\lp_1 \leq\lp\leq\lp_2<\infty$.
Observe that $[\lp_1,\lp_2]$ is nonempty
since it contains $\lp_0(\gp)$. We also define $\lp_1' \colonequals
\inf
\{\lp\geq\lp_1 \dvtx  \lp\Lambda_\mu^\star(\gp/\lp) \leq c_\mu(\gp
)\}$
and $\lp_2' \colonequals\sup\{\lp\leq\lp_2 \dvtx \lp
\Lambda_\mu^\star(\gp/\lp) \leq c_\mu(\gp)\}$.

We claim that
%
%e5.14 #&#
\begin{equation}\label{eqnpunif_cont}
\begin{tabular}{p{300pt}@{}}
$\forall\eps>0, \exists\delta>0$ so that $\forall\bigl(
\gp',\lp\bigr) \in[\gp- \delta, \gp+ \delta]\times \bigl[
\lp_1',\lp_2'\bigr]$ we have
 $\lp \Lambda_\mu^\star\bigl(
\gp'/\lp\bigr) > \lp\Lambda_\mu^\star(\gp/
\lp) - \frac{\eps}{4}.$
\end{tabular}
\end{equation}
Using \eqref{eqpalpha_equals_0}, observe that if $\gp= 0$, then
$c_\mu(\gp)$ is less than $\gamma_\kappa(0,0)$, which implies that
$\lp_1>0$. Therefore, \eqref{eqnpunif_cont} follows in the case $\gp=
0$ from the lower semicontinuity of $\Lambda_\mu^\star$ at 0. For the
case $\gp> 0$, we observe that $\lp\Lambda_\mu^\star(\gp/\lp)$ finite
on $[\lp_1',\lp_2']$. By lower semicontinuity and convexity of
$\Lambda_\mu^\star$, this implies that $\lp\Lambda_\mu^\star(\gp
/\lp)$
is continuous on $[\lp_1',\lp_2']$. Since $[\lp_1',\lp_2']$ is compact,
we conclude that $\lp\Lambda_\mu^\star(\gp/\lp)$ is uniformly continuous
on $[\lp_1',\lp_2']$. Since $\lp\Lambda_\mu^\star(\gp'/\lp)$
can be
written as $\frac{\gp'}{\gp}\frac{\lp\gp}{\gp'}
\Lambda_\mu^\star(\gp/(\lp\gp/\gp'))$ (a straightforward limiting
argument shows that this equality holds even when $\lp= 0$), the
uniform continuity of $\Lambda_\mu^\star$ implies \eqref{eqnpunif_cont}
except possibly at the endpoints $\lp_1'$ and $\lp_2'$. However, since
$\Lambda_\mu^\star$ is lower semicontinuous, \eqref{eqnpunif_cont}~holds at $\lp_1'$ and $\lp_2'$ as well.

Recall the collection of balls $\CD^{r}$ for $r>0$ that we defined in
the proof of
Proposition~\ref{proppupper_bound}.
For $n\in\N$,
let
\[
\CQ^{n} \colonequals \bigl\{ Q \in\CD^{\exp(-n)} \dvtx
\TLoopsum _{z(\eps)}\bigl(e^{-n}\bigr) \in (\gp- \delta, \gp+
\delta) \bigr\}.
\]
Our goal is to show that for all $Q \in\CD^{\exp(-n)}$ and $n$ sufficiently
large,
%
%e5.15 #&#
\begin{equation}
\label{eqnpQinQ} \P\bigl[Q\in\CQ^{n}\bigr] \leq e^{-n (c_\mu(\gp) - \eps/2)}.
\end{equation}
The rest of the proof is similar to that of
Proposition~\ref{proppupper_bound}. To prove \eqref{eqnpQinQ}, we
abbreviate $\TLoopcount_{z(Q)}(e^{-n}) $ as $\TLoopcount$ and write
\[
\P\bigl[Q \in\CQ^{n}\bigr] = \E \bigl[ \P \bigl[
\TLoopsum_{z(Q)}\bigl(e^{-n}\bigr) \in (\gp- \delta, \gp+
\delta) | \TLoopcount \bigr] \bigr].
\]
We split the conditional probability according to the value of
$\TLoopcount$:
\begin{eqnarray*}
&&\P\bigl[Q \in\CQ^{n}\bigr]\\
&&\qquad\leq\E \bigl[ \mathbf{1}_{\{\TLoopcount\notin
[\lp_1,\lp_2]\}}
\P \bigl[ \TLoopsum_{z(Q)}\bigl(e^{-n}\bigr) \in(\gp- \delta,
\gp+ \delta) | \TLoopcount \bigr] \bigr]
\\
&&\qquad\quad{}+\E \bigl[ \mathbf{1}_{\{\TLoopcount\in[\lp_1,\lp_2]\setminus
[\lp_1',\lp_2']\}} \P \bigl[ \TLoopsum_{z(Q)}
\bigl(e^{-n}\bigr) \in(\gp- \delta, \gp+ \delta) | \TLoopcount \bigr]
\bigr]
\\
&&\qquad\quad{}+\E \bigl[ \mathbf{1}_{\{\TLoopcount\in[\lp_1',\lp_2']\}} \P \bigl[ \TLoopsum_{z(Q)}
\bigl(e^{-n}\bigr) \in(\gp- \delta, \gp+ \delta) | \TLoopcount \bigr]
\bigr].
\end{eqnarray*}
The first term on the right-hand side is bounded above by
$e^{-n(c_\mu(\gp)+o(1))}$, because of our choice of $\lp_1$ and
$\lp_2$. Similarly, the second term is bounded above by
$e^{-n(c_\mu(\gp)+o(1))}$ by \hyperref[thmpcramer]{Cram\'er's theorem}
and our choice of $\lp_1'$ and $\lp_2'$.
Thus it remains to show that the third term is bounded above by
$e^{-n(c_\mu(\gp)-\eps/2)}$ for all $n$ sufficiently large.
Multiplying and dividing by
$e^{-n\TLoopcount\Lambda_\kappa^\star(1/\TLoopcount)}$, applying
\hyperref[thmpcramer]{Cram\'er's theorem},
and using \eqref{eqnpunif_cont}, we find that for large enough $n$, the
third term is bounded above by
\begin{eqnarray*}
& &\E\bigl[ \mathbf{1}_{\{\TLoopcount\in[\lp_1',\lp_2]\}} e^{-n(\TLoopcount\Lambda_\mu^\star(\gp/ \TLoopcount) +
\TLoopcount
\Lambda_\kappa^\star(1/\TLoopcount)- \eps/2)} e^{n \TLoopcount
\Lambda_\kappa^\star(1/\TLoopcount)} \bigr]
\\
&&\qquad \leq e^{-n(c_\mu(\gp)- \eps/4)} \E \bigl[ \one_{\TLoopcount\in[\lp_1',\lp_2']}e^{n \TLoopcount
\Lambda_\kappa^\star(1/\TLoopcount)} \bigr].
\end{eqnarray*}

Thus, it remains to show that $\E [ \one_{\TLoopcount\in[\lp
_1',\lp_2']} e^{n
\TLoopcount\Lambda_\kappa^\star(1/\TLoopcount)}  ] =
e^{o(n)}$. Applying \eqref{eqnpcramer_strong} from
\hyperref[thmpcramer]{Cram\'er's theorem}, we find that if $\lp
_{\mathrm{typical}}
\leq
\lp_2'$, we have
\begin{eqnarray*}
&&\E \bigl[ \one_{\TLoopcount\in[\lp_{\mathrm{typical}},\lp_2']} e^{n \TLoopcount
\Lambda_\kappa^\star(1/\TLoopcount)} \bigr] \\
&&\qquad\leq \sum
_{k=1}^{\lceil(\lp_2'-\lp_{\mathrm{typical}})n\rceil} \E \bigl[\one_{\{
(k-1)/n \leq
\TLoopcount- \lp_{\mathrm{typical}}< k/n\}}e^{n
\TLoopcount\Lambda_\kappa^\star(1/\TLoopcount)}
\bigr]
\\
&&\qquad\leq\sum_{k=1}^{\lceil(\lp_2'-\lp_{\mathrm{typical}})n\rceil} e^{n f_\kappa(k/n)} \E
[\one_{\{(k-1)/n \leq
\TLoopcount- \lp_{\mathrm{typical}}\}} ]
\\
&&\qquad\leq\sum_{k=1}^{\lceil(\lp_2'-\lp_{\mathrm{typical}})n\rceil} e^{n [f_\kappa
(k/n) -
f_\kappa((k-1)/n)]},
\end{eqnarray*}
where $f_\kappa(\lp)\colonequals\lp
\Lambda_\kappa^\star (\frac{1}{\lp} )$. The mean value theorem
implies that the summand is bounded above by
$\exp(\sup_{\lp\in[\lp_{\mathrm{typical}},\lp_2']}f_\kappa'(\lp
))$. Therefore, the
expectation on the event $\{\TLoopcount\in[\lp_{\mathrm
{typical}},\lp_2']\}$ is
$O(n)$. An analogous argument gives the same bound for the expectation on
$\{\TLoopcount\in[\lp_1',\lp_{\mathrm{typical}}]\}$, so
\[
\E \bigl[ \one_{\TLoopcount\in[\lp_1',\lp_2']} e^{n \TLoopcount
\Lambda_\kappa^\star(1/\TLoopcount)} \bigr] = O(n) =
e^{o(n)}.
\]

Now consider the case $\Lambda_\mu^\star(0)=\infty$, which implies that
$\lp_0(0) = 0$. As in the case $\Lambda_\mu^\star(0)<\infty$, it suffices
to show that for every $\eps>0$, there exists $\delta>0$ such that
%
%e5.16 #&#
\begin{equation}
\label{eqpalpha_nu_0} \P \bigl[\bigl|\TLoopsum_z\bigl(e^{-n}\bigr)\bigr|
\leq\delta \bigr] \leq e^{-n(\gamma_\kappa(0,0) - \eps)}.
\end{equation}
Choose $\eta>0$ small enough that $\lp\Lambda_\kappa^\star(\lp)
\geq
1-2/\kappa-3\kappa/32-\eps/2$ whenever $\lp\in(0,\eta)$. Then
choose $\delta> 0$ small enough that $\Lambda_\mu^\star(x)\geq
2/\eta$ for all $x\in(-\delta/\eta,\delta/\eta)$ (this is
possible by lower
semicontinuity of $\Lambda^\star$). Again abbreviating
$\TLoopcount_{z}(e^{-n})$ as $\TLoopcount$, we write
\begin{eqnarray*}
\P \bigl[\bigl|\TLoopsum_z\bigl(e^{-n}\bigr)\bigr|\leq\delta \bigr]
&= &\E \bigl[\P \bigl[\bigl|\TLoopsum_z\bigl(e^{-n}\bigr)\bigr|\leq
\delta\given \TLoopcount \bigr] \bigr]
\\
&=&\E \bigl[\P \bigl[\bigl|\TLoopsum_z\bigl(e^{-n}\bigr)\bigr|\leq
\delta\given \TLoopcount \bigr]\one_{\TLoopcount\in[0,\eta)} \bigr]
\\
&&{}+\E \bigl[\P \bigl[\bigl|\TLoopsum_z\bigl(e^{-n}\bigr)\bigr|\leq
\delta \given\TLoopcount \bigr]\one_{\TLoopcount\in[\eta,\infty)} \bigr].
\end{eqnarray*}
Bounding the conditional probability by 1 and using our choice of $\eta$,
we see that the first term is bounded above by
$e^{-n(\gamma_\kappa(0,0) - \eps)}$. For the second
term, we note by~\eqref{eqnpcramer_strong} in
\hyperref[thmpcramer]{Cram\'er's theorem} that the conditional
probability is bounded above by
\[
2\exp \Bigl( -n \TLoopcount\inf_{|y|\leq\delta/ \TLoopcount
}\Lambda^\star(y)
\Bigr).
\]
On the event where $\TLoopcount$ is at least $\eta$, the factor
$\TLoopcount\inf_{|y|\leq\delta/ \TLoopcount}\Lambda^\star(y)$
is at least $2$, which
implies that the second term is bounded by $e^{-2n}$. This establishes
\eqref{eqpalpha_nu_0} and completes the proof.
\end{pf}

\begin{pf*}{Proof of Theorem~\ref{thmpgff_maximum}}
The logarithmic moment generating function of the signed Bernoulli
distribution is
$\Lambda_B(\mgfparamtwo) = \log\cosh(\gffparam\mgfparamtwo)$.
When $\kappa= 4$, formula~\eqref{eqnpLambdaR} for $\Lambda_\kappa$ simplifies to
\[
\Lambda_4(\mgfparam)= \cases{ -\log\cosh(\pi\sqrt{-2\mgfparam}), &\quad
$\mgfparam<0,$ \vspace *{2pt}
\cr
-\log\cos(\pi\sqrt{2 \mgfparam}), &\quad $\mgfparam
\geq0.$}
\]
Using the definition of the Fenchel--Legendre transform,
\[
\lp_0(\gp) \Lambda_B^\star \biggl(
\frac{\gp}{\lp_0(\gp)} \biggr) + \lp_0(\gp) \Lambda_4^\star
\biggl(\frac{1}{\lp_0(\gp)} \biggr)= \inf_{\lp\geq0} \sup
_{\mgfparamtwo,\mgfparam} \bigl[ \mgfparamtwo\gp+ \mgfparam- \lp\bigl(
\Lambda_4(\mgfparam) + \Lambda_B(\mgfparamtwo)\bigr)
\bigr].
\]
By the minimax theorem (see, e.g., \cite{pollard:minimax}), the
right-hand side equals
\[
\sup_{\mgfparamtwo,\mgfparam} \inf_{\lp\geq0} \bigl[ \mgfparamtwo
\gp+ \mgfparam- \lp\bigl(\Lambda_4(\mgfparam) +
\Lambda_B(\mgfparamtwo)\bigr) \bigr] = \sup_{\mgfparamtwo,\mgfparam\dvtx  \Lambda_4(\mgfparam) + \Lambda
_B(\mgfparamtwo) \leq0}
[\mgfparamtwo\gp+ \mgfparam ].
\]
Since $\Lambda_4(\mgfparam)$ is continuous in $\lambda$ and $\Lambda
_4(\mgfparam) \to
\infty$ as $\mgfparam\to\infty$, if $\Lambda_4(\mgfparam) +
\Lambda_B(\mgfparamtwo) < 0$, then $\mgfparam$ can be increased so that
$\Lambda_4(\mgfparam) + \Lambda_B(\mgfparamtwo) = 0$.
Thus, this last supremum can be replaced by the supremum over
$\mgfparam$ and $\mgfparamtwo$ satisfying $\Lambda_4(\mgfparam) +
\Lambda_B(\mgfparamtwo) = 0$.

Observe that $\Lambda_B(\mgfparamtwo) \geq0$ for all $\mgfparamtwo
$, and
$\Lambda_4(\mgfparam) < 0$ only when $\lambda< 0$. It follows that if
$\Lambda_4(\mgfparam) + \Lambda_B(\mgfparamtwo)= 0$, then $\lambda<
0$ and we can use the formulas for $\Lambda_4$ and $\Lambda_B$ to
conclude that
%
%e5.17 #&#
\begin{equation}
\label{eqnpmgf_zero_imp} \Lambda_4(\mgfparam) + \Lambda_B(
\mgfparamtwo)= 0 \qquad\mbox{implies } \gffparam\mgfparamtwo= \pi\sqrt{-2\mgfparam}.
\end{equation}
So we have
\begin{eqnarray*}
\lp_0(\gp) \Lambda_B^\star \biggl(
\frac{\gp}{\lp_0(\gp)} \biggr) + \lp_0(\gp) \Lambda_4^\star
\biggl(\frac{1}{\lp_0(\gp)} \biggr)&=& \sup_{\mgfparamtwo,\mgfparam\dvtx  \Lambda_4(\mgfparam) +
\Lambda_B(\mgfparamtwo) = 0} (\mgfparamtwo
\gp+ \mgfparam)
\\
&=& \sup_{\mgfparam< 0} \biggl(\frac{\gp\pi}{\gffparam} \sqrt {-2\mgfparam} +
\mgfparam \biggr)
\\
&=& \frac{\pi^2\gp^2}{2\gffparam^2} ,
\end{eqnarray*}
since the supremum is achieved when $\mgfparam= -\gp^2\pi
^2/2\gffparam^2$.
\end{pf*}

\begin{pf*}{Proof of Theorem~\ref{thmpgff_maximum_support}}
In light of Theorems \ref{thmparbitrary_distribution_joint} and
\ref{thmparbitrary_distribution}, it suffices to show that the maximum
of $2-\gamma_\kappa(\gp,\lp)$
is obtained when $\lp= \frac{\gp}{\gffparam} \coth(\frac{\pi^2
\gp}{\gffparam})$. As in the proof of
Theorem~\ref{thmpgff_maximum}, we begin by writing
\[
\gamma_\kappa(\gp,\lp) = \lp\Lambda_\kappa^\star
\biggl(\frac
{\gp}{\lp
} \biggr) + \lp\Lambda_\mu^\star
\biggl(\frac{1}{\lp} \biggr) = \sup_{\mgfparamtwo,\mgfparam} \bigl[
\mgfparamtwo\gp+ \mgfparam- \lp\bigl(\Lambda_\kappa(\mgfparam) +
\Lambda_\mu(\mgfparamtwo)\bigr) \bigr].
\]
At the minimizing value of $\lp$ and the corresponding maximizing values
of $\mgfparamtwo$ and~$\mgfparam$, the derivatives of the expression
in brackets with
respect to $\lp$, $\mgfparam$, and $\mgfparamtwo$ are all zero.
Differentiating,
we obtain the system
\begin{eqnarray*}
\Lambda_\kappa(\mgfparam) + \Lambda_\mu(\mgfparamtwo) &=& 0,
\\
\Lambda_\kappa'(\mgfparam) = \frac{1}{\gp}
\Lambda_\mu '(\mgfparamtwo) &=& \frac{1}{\lp}.
\end{eqnarray*}
The first equation implies $\gffparam\mgfparamtwo= \pi\sqrt
{-2\mgfparam}$ as in \eqref{eqnpmgf_zero_imp}. Substituting for
$\mgfparam$ in the equation $\Lambda_\kappa'(\mgfparam)
= \frac{1}{\gp} \Lambda_\mu'(\mgfparamtwo)$, we get $\gp=
\gffparam^2
\mgfparamtwo/\pi^2$. Finally, substituting into $\frac{1}{\gp}
\Lambda_\mu'(\mgfparamtwo)=1/\lp$ gives $\lp= \frac{\gp
}{\gffparam}
\coth (\frac{\pi^2 \gp}{\gffparam} )$, as desired.
\end{pf*}

%s6 #&#
\section{Further questions}
\label{secpquestions}

%\makeatletter{}\newcounter{question}
%\setcounter{question}{1}

One of the consequences of Theorem~\ref{thmpmain} is that for each
$\kappa\in(8/3,8)$ there exists a constant $c$ such that the
following is true. Almost surely,
\[
\sup_{z \in D} \SLoopcount_z(\eps) = c\bigl(1+o(1)
\bigr) \log(1/\eps)\qquad \mbox {as } \eps\to0.
\]
Is it possible to remove the $o(1)$ and give the order of the
correction term? In particular, in analogy with the work of Bramson and
Zeitouni \cite{BZ_TIGHT} for the discrete GFF, is it true that there
exist a constant $b$ such that
\[
\sup_{z \in D} \SLoopcount_z(\eps) = c \log(1/\eps) +
b \log\log (1/\eps)+O(1) \qquad\mbox{as } \eps\to0?
\]

Do the discrete loop models that are known to converge to CLE have the
same extreme nesting behavior as CLE?

%\appendix
\section*{Notation}
\label{secpnotation}
\begin{itemize}
\item$D$ is a simply connected proper domain in $\C$, that is,
$\varnothing\subsetneq D\subsetneq\C$ (page \pageref{notpdomD}).
\item$\Gamma$ denotes a $\CLE_\kappa$ process on $D$
(page \pageref{notpCLE}).
\item$\Loopcount_z(\eps)$ is the number of loops of $\Gamma$ which
surround $B(z,\eps)$ (page \pageref{notpLoopcount}).
\item$\Phi_\lp(\Gamma)$ is the set of all $z\in D$ such that
$\Loopcount_z(\eps) = (\lp+o(1))\log(1/\eps)$ as $\eps\to0$
[\eqref{notpPhi_lp} on page \pageref{notpPhi_lp}].
\item$\Loop_z$ is the sequence of loops of $\Gamma$ which surround $z$
(page \pageref{notploop}).
\item$\Loop_z^j$ is the $j$th loop of $\Gamma$ which surrounds $z$
(page \pageref{notploop}).
\item$U_z^j$ is the connected component of $D \setminus\Loop_z^j$ which
contains $z$ (page \pageref{notploop}).
\item$\gamma_\kappa(\lp)$ is the exponent for how unlikely it is
for a point
to be surrounded by a $\lp$ density of loops [\eqref{eqnpg(nu)} on
page \pageref{eqnpg(nu)}]:
\[
\log\P \bigl[J_{z,r}^\subset= \bigl(\lp+o(1)\bigr) \log(1/r)
\bigr] = \bigl(\gamma _\kappa(\lp )+o(1)\bigr)\log(1/r).
\]
\item$\Lambda_\kappa$ is the $\log$ moment
generating function for the $\log$ conformal radius distribution, and
$\Lambda_\kappa^*$ is its
Fenchel--Legendre transform (pages \pageref{notpfenchel}, \pageref
{notpfenchel_two}).
\item$\SLoopcount_z(\eps)$ is the sum of the loop weights over the loops
of $\Gamma$ which surround $B(z,\eps)$
[\eqref{eqnpnormalized_loop_count} on page \pageref{eqnpnormalized_loop_count}].
\item$\TLoopcount_z(\eps)$ and $\TLoopsum_z(\eps)$ are normalized versions
of $\Loopcount_z(\eps)$ and $\SLoopcount_z(\eps)$, obtained by dividing
by $\log(1/\eps)$ [\eqref{eqpTloopcount} on page \pageref{eqpTloopcount}
and \eqref{eqnpnormalized_loop_count} on page \pageref{eqnpnormalized_loop_count}].
\item$\mu$ is the weight distribution on the loops (page~\pageref{notpmu}).
\item$(T_i)_{i=1}^{\infty}$ is the sequence of $\log$ conformal radii
increments for $\CLE$ loops which surround a given point
(page \pageref{notpTi}).
\item$t_k = 2^{-(k+1)}$ and $s_k = \prod_{1\leq j < k}t_j$ (page~\pageref{notptk}). %and (page \pageref{notpsk})
\item$J_{z,r}^\cap$ is the index of the first loop of $\Loop_z$ which
intersects $B(z,r)$ [\eqref{notpJcapsubset} on page~\pageref
{notpJcapsubset}].
\item$J_{z,r}^\subset$ is the index of the first loop of $\Loop_z$ which
is contained in $B(z,r)$ [\eqref{notpJcapsubset} on page~\pageref{notpJcapsubset}].
\item$\Gamma_z(\eps)$ is the set of loops of $\Gamma$ which surround
$B(z,\eps)$ (page \pageref{notpGamma_z}).
\end{itemize}

\section*{Acknowledgements}
We thank Amir Dembo for providing us with the computation which
extracts Theorems \ref{thmpgff_maximum} and
\ref{thmpgff_maximum_support} from
Theorem~\ref{thmparbitrary_distribution}. We also thank Nike Sun for
helpful comments.
Both J. Miller and S.~S. Watson thank the hospitality of the Theory Group at Microsoft
Research, where part of the research for this work was completed.

%\makeatletter
%\def\@rst #1 #2other{#1}
%\renewcommand\MR[1]{\relax\ifhmode\unskip\spacefactor3000 \space\fi
% \MRhref{\expandafter\@rst #1 other}{#1}}
%\newcommand{\MRhref}[2]{
%\href{http://www.ams.org/mathscinet-getitem?mr=#1}{MR#1}}
%\makeatother
%
%\bibliographystyle{hmralphaabbrv}
%%\bibliography{../maxloops}
% imsref loaded by akundreckaite, 2015-02-23 10:05:10
%

%\begin{appendix}
%\section{}
%\end{appendix}

% zodis "Acknowledgments" paliekamas pagal autoriu
%\section*{Acknowledgments}

%\begin{supplement}[id=suppA]
%\sname{Supplement A}
%\stitle{}
%\slink[doi]{10.1214/00-AOPXXXXSUPP} %[doi,text={...}] - jei reikia
%suskaldyti doi
%\sdatatype{.pdf}
%\sfilename{aopXXXX\_supp.pdf}
%\sdescription{}
%\end{supplement}

%\begin{thebibliography}{99}
%\bibitem[\protect\citeauthoryear{}{}]{r1}
%\bibitem{r1}
%\end{thebibliography}

\printaddresses
\end{document}